\makeatletter \theoremstyle{plain}
\newtheorem{thm}{Theorem}[section]
\newtheorem{lem}[thm]{Lemma}
\newtheorem{prop}[thm]{Proposition}
\theoremstyle{remark}
\newtheorem{rmk}[thm]{Remark}
\theoremstyle{definition}
\newtheorem{defin}[thm]{Definition}
 \newcommand{\N}{\mathbb{N}} 
 \newcommand{\R}{\mathbb{R}}
 \newcommand{\sphere}{\mathbb{S}} 
 \newcommand*\colvec[3][]{\begin{pmatrix}\ifx\relax#1\relax\else#1\\\fi#2\\#3\end{pmatrix}}
 \newcommand{\Hhh}{\mathscr{H}} 
 \newcommand{\Lll}{\mathscr{L}}
 \newcommand{\dist}{\mathrm{dist}}
 \newcommand{\dm}{\mathrm{d}}
  \newcommand{\0}{\mathbf{0}}
  \newcommand{\woz}{\backslash\{0\}}
\newcommand\ndot{{\mkern 2mu\cdot\mkern 2mu}}
 \newcommand{\lm}{\varnothing}  
\begin{document}

\title{Marstrand type projection theorems for normed spaces}
\author{Zolt\'an M. Balogh and  Annina Iseli}

\address {Mathematisches Institut,
Universit\"at Bern,
Sidlerstrasse 5,
CH-3012 Bern,
Switzerland}

\email{zoltan.balogh@math.unibe.ch}
\email{annina.iseli@math.unibe.ch}

\keywords{Hausdorff dimension, projections, 
{\it 2010 Mathematics Subject Classification: 28A78} }

\thanks{ This research was supported by the Swiss National Science Foundation Grant Nr. 200020 165507 .}
\begin{abstract}
We consider Marstrand type projection theorems for closest-point projections in the normed space $\R^2$.
We prove that if a norm on $\R^2$ is regular enough, then the analogues of the well-known statements from the Euclidean setting hold, while they fail for norms whose unit balls have corners. We establish our results by verifying Peres and Schlag's transversality property and thereby also obtain a Besicovitch-Federer type characterization of purely unrectifiable sets. \end{abstract}
\maketitle

\section{Introduction}
Marstrand's projection theorem~\cite{Marstrand1954} states that for a Borel set $A\subset \R^2$ of Hausdorff dimension $\dim A\geq 1$, for almost every line $L$ in $\R^2$ that passes through the origin, the image of $A$ under the orthogonal projection onto the line $L$ is of Hausdorff dimension~$1$. This result marked the start of a large series of results on the distortion of different notions of dimension under various classes of mappings, to which many authors have contributed over the last decades. In particular,
Kaufman~\cite{Kaufman1968}, Mattila~\cite{Mattila1975} and Falconer~\cite{Falconer1982} refined Marstrand's estimate for the size of the exceptional set of lines, added the case $\dim A <1$ and generalized  the results to higher dimensions.
Similar problems have been studied in various non-Euclidean settings: Balogh et.\,al \cite{BFMT2012},\cite{BDCFMT2013}, as well as Hovila~\cite{Hovila2014} have established Marstrand type projection theorems for the family of isotropic projections on the Heisenberg groups. Similar results have been proven for the family of orthogonal projections along geodesics on simply connected Riemannian surfaces of constant sectional curvature, by the authors of this paper~\cite{BaloghIseli2016}. While \cite{BFMT2012} and \cite{BDCFMT2013} adapted Kaufman's respectively Mattila's methods in order to obtain their results, \cite{Hovila2014} as well as \cite{BaloghIseli2016} employed the general projection theory due to Peres and Schlag~\cite{PS2000}. 
For a more extensive account on classical and recent projection theory, we recommend the expository articles \cite{Mattila2004} and \cite{FFJ2015}, as well as the textbooks \cite{Mattila1995} and \cite{Mattila2015}. \\[4pt]
In the general metric setting, the notion of orthogonal projections is not defined. Therefore it is more natural to replace it by the notion of closest-point projections. In particular, in Euclidean space orthogonal projections coincide with closest-point projections.\\
First, notice that for any strictly convex norm $N$ on $\R^2$ (that is, the closed unit ball $B_N(0,1)$ with respect to $N$ is a strictly convex set) the closest-point projection onto any straight line $L$ is well-defined. These closest-point projections with respect to $N$ are Lipschitz mappings and therefore do not increase Hausdorff dimension. Therefore, the question for a generic lower bound on the dimension of the projected set is a natural~one.\\
Since any two norms on $\R^2$ are bi-Lipschitz equivalent and the Hausdorff dimension is invariant under bi-Lipschitz maps, one might think that in the normed spaces setting, Marstrand type projection theorems are an easy consequence of their well-known Euclidean analogues. However, this is not case: As we shall see in Sections~\ref{sec_examples} and \ref{sec_final}, Marstrand type theorem fail for norms whose unit sphere has corner type singularities.
On the other hand, we prove that Marstrand type results do hold whenever $N$ is regular enough. To formulate our results, for any direction $v\in \sphere^{1}$ let us denote the line through the origin perpendicular to $v$ by $H_v$, and let $P_v:\R^2\to H_v$ be the closest point projection onto $H_v$ with respect to the norm $N$. For $s>0$, we denote by $\Hhh^s$ the Hausdorff $s$-measure on (subspaces of) $\R^2$ and by $\dim(A)$ the Hausdorff dimension of a set $A\in\R^2$. 

\begin{thm}\label{thm1} Let $(\R^2,N)$ be a normed space, where the restriction of the norm $N$ to $\R^2\woz$ is of class $C^{2,\delta}$ for some $\delta>0$ and the unit circle with respect to $N$ has strictly positive curvature. Let $A\in \R^2$ be a Borel set and let $s:=\dim A$. Then, the following hold:
\begin{enumerate}[itemsep=3pt]
\item if $s>1$ then
 \begin{enumerate}[label=(1.\alph*), itemsep=2pt]
\item $\Hhh^{1}(P_v(A))>0$ for $\Hhh^{1}$-a.e. $v\in\sphere^{1}$.
\item $\dim\{v\in\sphere^{1}: \Hhh^{1}(P_v(A))=0\}\leq 2-\min\{s,1+\delta\}$.
\end{enumerate}
\item if $s\leq 1$, then \begin{enumerate}[label=(2.\alph*), itemsep=2pt]
\item $\dim(P_v(A))=\dim(A)$ for $\Hhh^{1}$-a.e. $v\in\sphere^{1}$,
\item $\dim\{v\in\sphere^{1}: \dim(P_v(A))<s\}\leq s$.
\end{enumerate}
\end{enumerate}
\end{thm}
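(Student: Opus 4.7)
The plan is to apply the general projection-theoretic machinery of Peres and Schlag \cite{PS2000}, which reduces Marstrand-type conclusions to verifying a transversality property for the parameterized family $\{P_v\}_{v\in\sphere^1}$. The first step is to derive an explicit formula for $P_v$. Writing $v=v(s)=(\cos s,\sin s)$, and using a Lagrange argument, the closest point $P_v(x)\in H_v$ satisfies $x-P_v(x)=t\,\varphi(v)$ where $\varphi(v)\in\sphere^1$ is the (Euclidean) unit vector with $\nabla N(\varphi(v))\parallel v$, and $t=\langle x,v\rangle/\langle \varphi(v),v\rangle$. Identifying $H_v$ with $\R$ via $y\mapsto\langle y,v^\perp\rangle$, the family becomes
\[
\pi_s(x)\;=\;\langle x,v^\perp\rangle-\frac{\langle x,v\rangle\langle\varphi(v),v^\perp\rangle}{\langle\varphi(v),v\rangle},
\]
which is the one-parameter family of maps $\R^2\to\R$ to which the Peres--Schlag theory applies.

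Next I would analyze the map $\varphi$. Up to identifications, $\varphi$ is the inverse of the Gauss map of the unit sphere $\partial B_N$. Since $N\in C^{2,\delta}(\R^2\setminus\{0\})$ and the unit circle has strictly positive curvature, the Gauss map is a $C^{1,\delta}$ diffeomorphism of $\sphere^1$ with nowhere-vanishing derivative, so $\varphi$ has the same regularity. Concretely, writing $\varphi(v(s))=(\cos(s+\alpha(s)),\sin(s+\alpha(s)))$, the function $\alpha\in C^{1,\delta}(\sphere^1)$ and $1+\alpha'(s)\geq c_0>0$ uniformly. This is the place where both hypotheses on $N$ enter in an essential way.

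The crux is to verify the Peres--Schlag transversality property for $\pi_s$. Normalizing $e=(x-y)/|x-y|\in\sphere^1$ and working in the moving frame $(v(s),v^\perp(s))$, one has $e=(\cos\theta(s),\sin\theta(s))$ with $\theta(s)=\theta_0-s$, and $F_e(s):=\pi_s(x-y)/|x-y|=\sin\theta-\tan\alpha(s)\cos\theta$. A short calculation shows $F_e(s)=0$ exactly when $e\parallel\varphi(v(s))$, i.e.\ when $\theta(s)\equiv\alpha(s)\pmod\pi$, and at any such point
\[
F_e'(s)\;=\;-\frac{1+\alpha'(s)}{\cos\alpha(s)},
\]
which by the previous paragraph is bounded away from $0$. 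A compactness/continuity argument then upgrades this pointwise non-degeneracy at zeros to the uniform Peres--Schlag transversality estimate: there exist $c,\beta>0$ such that whenever $|F_e(s)|\le\beta$, we have $|F_e'(s)|\ge c$. The $C^{1,\delta}$ regularity of $\alpha$ moreover guarantees that $F_e'$ is H\"older continuous of exponent $\delta$ in $s$, jointly with uniform control in $e$, so that the family $\{\pi_s\}$ is \emph{regular $(1,\delta)$-transversal} in the sense of \cite{PS2000}.

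Once transversality is established, statements (1.a), (2.a) and (2.b) follow from the standard Peres--Schlag/Marstrand--Kaufman energy estimates applied to the family $\{\pi_s\}$, exactly as in the Euclidean proof. Statement (1.b) follows from the Peres--Schlag estimate for exceptional parameter sets under transversality of H\"older regularity $\delta$, which produces the cap $1+\delta$ in $2-\min\{s,1+\delta\}$. The main obstacle I expect is a bookkeeping one, namely turning the pointwise identity for $F_e'$ at zeros into the uniform transversality inequality with the correct H\"older dependence, and then checking that the Peres--Schlag hypotheses on the family are met globally on $\sphere^1$ (not only on a chart); the essential analytic input is already encoded in $1+\alpha'>0$ and $\alpha\in C^{1,\delta}$, which is why the precise regularity of $N$ enters the exponent in (1.b).
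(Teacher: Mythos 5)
Your proposal follows essentially the same route as the paper: derive the closest-point projection formula via the inverse Gauss map, reduce to a scalar family that is linear in $x$ (so that transversality need only be checked on unit vectors), verify the Peres--Schlag regularity and transversality conditions by showing that at a zero of the projection the $s$-derivative is governed by the nonvanishing, $\delta$-H\"older derivative of the inverse Gauss map, and then invoke Theorem 4.9 of \cite{PS2000}. Your explicit identity $F_e'(s)=-(1+\alpha'(s))/\cos\alpha(s)$ at zeros is a coordinate version of the paper's formula $\frac{\dm}{\dm t}P(v(t_0),x)=-\psi(t_0)\,DG^{-1}(v(t_0))(\dot{v}(t_0))$, and the compactness step upgrading nondegeneracy at zeros to uniform transversality is carried out in the paper exactly as you describe.
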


The proof of Theorem~\ref{thm1} consists of two parts: First, we will give an explicit formula~\eqref{proj_formula} for the projections $P_v:\R^2\to H_v$, $v\in\sphere^1$, in terms of the Gauss map. (Recall that the Gauss map by definition assigns to each point on the $N$-unit sphere its unit outward normal.) Having this explicit formula at hand we can apply the general projection theorem of Peres and Schlag to prove Theorem~\ref{thm1}. To check the assumptions of their theorem (that are sufficient regularity as well as the transversality property) is a non-trivial matter. This is done in Theorem~\ref{mainlemma}, which is our main technical result.
As we shall see, Theorem~\ref{mainlemma} implies Theorem~\ref{thm1} immediately (Section~\ref{sec_proofs_2dim}). \\[4pt]
As follows from a recent result of Hovila et.\,al~\cite{HJJL2012}, another remarkable consequence of Theorem~\ref{mainlemma} is the following Besicovitch-Federer \cite{Besic1939}, \cite{Federer1947} type characterization of purely unrectifiability.
\begin{thm}\label{thm2}
Assume that the assumptions of Theorem~\ref{thm1} hold and let $A$ be a $\Hhh^1$-measurable subset of $\R^2$ with $\Hhh^1(A)<\infty$. Then, $A$ is purely $1$-unrectifiable if and only if $\Hhh^1(P_v(A))=0$ for $\Lll^1$-a.e. $v\in\sphere^1$.
\end{thm}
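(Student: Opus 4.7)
The strategy is to deduce Theorem~\ref{thm2} directly from Theorem~\ref{mainlemma} by invoking the Besicovitch-Federer type characterization of purely unrectifiable sets for transversal families of projections established by Hovila, J\"arvenp\"a\"a, J\"arvenp\"a\"a and Ledrappier in~\cite{HJJL2012}. Their theorem says, roughly, that if a smoothly parameterized family of generalized projections satisfies the Peres-Schlag transversality property of positive order, then the classical Besicovitch-Federer equivalence between pure unrectifiability and a.e.\ vanishing of projections remains valid for this family.

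The first step is to put our family $\{P_v\}_{v\in\sphere^1}$ into the framework of~\cite{HJJL2012}. Using the explicit formula for $P_v$ derived in the course of the proof of Theorem~\ref{mainlemma}, together with a linear identification of each line $H_v$ with $\R$ depending smoothly on $v$, the maps $P_v$ can be rewritten as a one-parameter family of generalized projections $\pi_v:\R^2\to\R$ in the sense of~\cite{PS2000}, with $v\in\sphere^1$. Theorem~\ref{mainlemma} verifies for this family precisely the combination of $C^{1,\delta}$ regularity in the parameter and transversality of positive order that is required as input in the framework of~\cite{HJJL2012}.

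The forward implication --- $A$ purely $1$-unrectifiable with $\Hhh^1(A)<\infty$ implies $\Hhh^1(P_v(A))=0$ for $\Lll^1$-a.e.\ $v$ --- is then the extension of the Besicovitch-Federer theorem to transversal families proven in~\cite{HJJL2012}, applied to our family. For the converse, suppose $A$ is not purely $1$-unrectifiable; then $A$ contains a $1$-rectifiable subset $E$ with $\Hhh^1(E)>0$, and hence (up to an $\Hhh^1$-null set) $E\subset\phi(K)$ for a Lipschitz map $\phi:K\to\R^2$ with $K\subset\R$ of positive $\Hhh^1$-measure. The composition $P_v\circ\phi$ is Lipschitz, and the transversality provided by Theorem~\ref{mainlemma} prevents it from collapsing $K$ to a null set except on a negligible set of directions $v$; consequently $\Hhh^1(P_v(A))>0$ on a set of $v$ of positive $\Lll^1$-measure, contradicting the assumption. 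Both directions are handled simultaneously by~\cite{HJJL2012} once transversality is known.

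The only real obstacle --- verification of Peres-Schlag transversality for closest-point projections with respect to a non-Euclidean norm --- has already been overcome in Theorem~\ref{mainlemma}. Given that result, Theorem~\ref{thm2} amounts to a citation of~\cite{HJJL2012}, and the proof essentially consists of checking that the hypotheses of their theorem are met by our setup.
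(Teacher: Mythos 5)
Your proposal is correct and follows essentially the same route as the paper: the authors also deduce Theorem~\ref{thm2} by applying the Besicovitch--Federer type theorem of \cite{HJJL2012} (their Theorem 1.2) to the family $\Pi(t,x)=P(v(t),x)\cdot Rv(t)$, whose regularity and transversality are supplied by Theorem~\ref{mainlemma}, and then transferring the conclusion back to the projections $P_v$ via the isometric identification of $H_v$ with $\R$ and the local isometry $t\mapsto v(t)$. Your independent sketch of the converse direction is superfluous (and somewhat loose), but harmless, since you correctly note that both implications are already contained in the cited result.
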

Our paper is organized as follows: Section~\ref{sec_prelim} is for preliminaries, including a brief introduction to   the general projection theory due to Peres and Schlag~\cite{PS2000}. In Section~\ref{sec_proj}, we give a precise definition of our setting and derive an explicit formula for the family of closest-point projections in terms of the Gauss map. In Section~\ref{sec_proofs_2dim}, we formulate Theorem~\ref{mainlemma} and prove our main results. In Section~\ref{sec_examples}, we study the example of $p$-norms on $\R^2$ and thereby prove the necessity of regularity assumptions on $N$. Finally, in Section~\ref{sec_final}, we will give local versions of our main results and briefly address the higher dimensional setting.\\[6pt]
\textit{Acknowledgements:} The authors thank Katrin F\"assler, Pertti Mattila and Tuomas Orponen for valuable discussions on the subject of projection theorems. We also thank the referee for a careful reading of the paper and for helpful remarks.

\section{Preliminaries}\label{sec_prelim}

\subsection{H\"older spaces $C^{k,\delta}$}\label{sec_hoelder}

Let $U$ be an open subset of $\R^n$ and $0<\delta\leq 1$ and $k\in \N_0$. We say that $f:U\to\R^m$ is $\delta$\textit{-H\"older} if there exists $C_{\delta}$ such that for all $x,y\in U$, the estimate $\| f(x)-f(y)\|\leq C_\delta \|x-y\|^\delta$ holds. Furthermore, we say that $f:U\to\R^m$ is 
of class $C^{k,\delta}$ if $f$ is $k$-times continuously differentiable (i.e. $f$ is of class $C^k$) and its (partial) derivative(s) of order $k$ are locally $\delta$-H\"older, that is, for any $K\subset U$ compact, there exists a constant $M_K$ such that for all $x,y\in K$ and $\alpha$ multi-index with $|\alpha |=k$,
$$\|D^\alpha f (x)- D^\alpha f (y) \|\leq M_K \|x-y\|^\delta.$$

Note that, in particular, $C^{k,\delta}(U)\subset C^{k}(U)$ for all $0<\delta\leq 1$.\\

The class of $C^{k,\delta}$ mappings has many properties in common with the class of $C^k$ mappings. In particular, it is easy to check, that products and quotients with non-vanishing denominator of mappings of class $C^{k,\delta}$ are themselves $C^{k,\delta}$. Also, whenever $f,g$ are of class $C^{k,\delta}$ for some $k\in \N$, $0<\delta<1$, then $f\circ g$ is of class $C^{k,\delta^2}$. The following theorem is a version of the inverse function theorem for H\"older spaces:

\begin{thm}\label{thm_InverseFunctionTheorem}
Let $f:U\to \R^n$ be a mapping of class $C^{k,\delta}$ for an open set $U\subset \R^n$ that contains~$0$, where $k\in \N$ and $0<\delta<1$. Assume that $Df(0):\R^n\to \R^n$ is a linear diffeomorphism, then $f$ has a local inverse $f^{-1}$ at $0$ and $f^{-1}$ is of class $C^{k,\delta}$.
\end{thm}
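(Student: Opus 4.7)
The plan is to reduce the theorem to the classical $C^k$-inverse function theorem and then to bootstrap H\"older regularity via the defining identity $Dg(y)=(Df(g(y)))^{-1}$, using the algebraic closure properties of $C^{k,\delta}$ recorded in Section~\ref{sec_hoelder}.

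First I would apply the classical inverse function theorem to $f$: since $f\in C^k$ and $Df(0)$ is invertible, there exist open neighborhoods $V\ni 0$ and $W\ni f(0)$ such that $f|_V:V\to W$ is a $C^k$-diffeomorphism with inverse $g:=f^{-1}:W\to V$. After shrinking $V$ we may arrange that $\overline V\subset U$ is compact and that $Df(x)\in GL(n,\R)$ for every $x\in V$. Differentiating the relation $g\circ f=\mathrm{id}_V$ yields the key identity
\[
Dg(y)\;=\;\bigl(Df(g(y))\bigr)^{-1},\qquad y\in W.
\]

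I would then argue by induction on $j=1,\dots,k$ that $g\in C^{j,\delta}$. For the base case $j=1$: $Df\in C^{k-1,\delta}\subseteq C^{0,\delta}$ on $V$, and matrix inversion is real-analytic on $GL(n,\R)$, so $x\mapsto (Df(x))^{-1}$ is $C^{0,\delta}$ on $V$. Since $g$ is $C^1$ on the relatively compact set $W$, it is Lipschitz there, and composing a $\delta$-H\"older map with a Lipschitz map produces a $\delta$-H\"older map. Thus the identity above shows that $Dg$ is locally $\delta$-H\"older, i.e.\ $g\in C^{1,\delta}$. For the inductive step, suppose $g\in C^{j,\delta}$ with $1\leq j<k$, and differentiate the identity $j$ more times via Fa\`{a} di Bruno's formula. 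The result is a polynomial expression for $D^{j+1}g(y)$ in the entries of $(Df(g(y)))^{-1}$, of $D^\alpha f(g(y))$ for $|\alpha|\leq j+1\leq k$, and of $D^\beta g(y)$ for $|\beta|\leq j$. Every factor is locally $\delta$-H\"older: the derivatives $D^\alpha f$ are $\delta$-H\"older by hypothesis on $f$, and composing with the Lipschitz map $g$ preserves $\delta$-H\"older regularity; the derivatives $D^\beta g$ with $|\beta|\leq j$ are $\delta$-H\"older by the inductive hypothesis; and $(Df\circ g)^{-1}$ is $\delta$-H\"older by the same argument as in the base case. Since sums and products of locally $\delta$-H\"older functions are again locally $\delta$-H\"older, $D^{j+1}g$ is locally $\delta$-H\"older, closing the induction and giving $g\in C^{k,\delta}$.

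The main technical obstacle is the bookkeeping in the inductive step: one must verify that the Fa\`{a} di Bruno expansion of $D^{j+1}g$ contains only factors of regularity at least $C^{0,\delta}$, and that every composition $D^\alpha f\circ g$ (as well as the inversion $(Df\circ g)^{-1}$) preserves $\delta$-H\"older regularity without loss of exponent. The pivotal observation is that $g$ is already $C^1$, hence locally Lipschitz, from the very moment the classical inverse function theorem is invoked; this Lipschitz regularity lets the outer $\delta$-H\"older regularity pass through compositions, and the algebraic closure properties of the H\"older classes then finish the proof.
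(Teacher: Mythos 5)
The paper does not actually prove this statement; it only cites \cite{Hajlasz2005}, so there is no internal proof to compare against. Your argument is the standard bootstrap and is correct: invoke the classical $C^k$ inverse function theorem, then upgrade to $C^{k,\delta}$ via the identity $Dg(y)=(Df(g(y)))^{-1}$ and induction on the order of differentiation. You correctly isolate the one point where the na\"ive use of the paper's closure properties would fail --- composing two $\delta$-H\"older maps only yields $\delta^2$-H\"older in general --- and circumvent it by observing that $g$ is already $C^1$, hence locally Lipschitz, so every composition $D^\alpha f\circ g$ and the inversion $(Df\circ g)^{-1}$ retain the exponent $\delta$ on compact subsets; together with local boundedness this makes the Fa\`a di Bruno polynomial locally $\delta$-H\"older. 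This is precisely the argument one finds in the cited reference, so nothing further is needed.
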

The proof for Theorem~\ref{thm_InverseFunctionTheorem} is well-known and can be found in \cite{Hajlasz2005}.
 
\subsection{The projection theorem of Peres and Schlag} \label{sec_PS}
In this section, we briefly introduce a specific version of Peres-Schlag's projection theorem \cite{PS2000}. For a more general presentation we also suggest \cite{Mattila2015} and~\cite{Mattila2004}.\\[3pt]
Let $(\Omega,\dm)$ be a compact metric space, $J\subset \R$ an open interval and $\Pi$ a continuous map
\begin{equation}\label{eq_Pi}
\Pi: J\times \Omega \rightarrow \R, \ \
(\lambda, \omega) \mapsto \Pi(\lambda,\omega).
\end{equation}
We think of $\Pi$ as a family of projections $\Pi_\lambda \omega:=\Pi(\lambda,\omega)$ over the parameter interval $J$. Let $\lambda\in J$ and $\omega_1,\omega_2\in \Omega$ two distinct points. We define
\begin{equation}\label{def_Phi}
\Phi_\lambda(\omega_1, \omega_2)=\frac{\Pi_\lambda \omega_1 - \Pi_\lambda \omega_2}{\dm(\omega_1,\omega_2)}.
\end{equation}

All crucial properties for the abstract projection theorem  are collected in the following definition:
\begin{defin}\label{def_PS}
 \begin{enumerate}[label={(\alph*)},topsep=0pt,itemsep=0pt,partopsep=0pt,parsep=0pt] \item We say that $\Pi$ is $C^{1,\delta}$-regular for some $0<\delta<1$ if for any compact interval $I\subset J$, there exists constants $C_{I,\delta}>0$ such that:
\begin{itemize}[label=\raisebox{0.25ex}{\tiny$\bullet$}, itemsep=3pt]
\item  for all $\lambda\in I$ and $\omega \in \Omega$, $\left|\frac{\dm}{\dm\lambda}\Pi(\lambda,\omega) \right|\leq C_{I,\delta},$ i.e., $\frac{\dm}{\dm\lambda}\Pi:I\times \Omega \to \R$  is bounded,
\item for all $\lambda_1,\lambda_2\in I$ and $\omega \in \Omega$:
$$\left|\frac{\dm}{\dm \lambda} \Pi(\lambda_1,\omega) - \frac{\dm}{\dm \lambda} \Pi(\lambda_2, \omega)\right|\leq C_{I,\delta} \left| \lambda_1-\lambda_2\right|^\delta.$$
i.e. $\lambda \mapsto  \frac{\dm}{\dm\lambda}\Pi(\lambda, \omega)$ (for fixed $\omega \in \Omega$) is $\delta$-H\"older on $I$.
\end{itemize}
\item We say that $J$ is an interval of transversality for $\Pi$ (or shorter: $\Pi:J\times \Omega\to \R$ satisfies transversality), if there exists a constant $C>0$, such that for all pairs of distinct points $\omega_1,\omega_2\in \Omega$ and $\lambda\in J$,
for which $|\Phi_\lambda(\omega_1,\omega_2)|\leq C $,
 \begin{equation*}
 \left|\frac{\dm}{\dm \lambda}\Phi_\lambda(\omega_1,\omega_2)\right|\geq C.
\end{equation*}
\item We say that $\Phi$ is $(1,\delta)$-regular for some $0<\delta<1$ if there exist $C_\delta>0$ and $\tilde{C}>0$ such that, whenever $|\Phi_{\lambda_1}(\omega_1,\omega_2)|+ |\Phi_{\lambda_2}(\omega_1,\omega_2)|\leq C $ (where $C$ as in (b)) for $\omega_1\neq \omega_2\in \Omega$ and $\lambda_1,\lambda_2\in J$, then,
\begin{itemize}[label=\raisebox{0.25ex}{\tiny$\bullet$}, itemsep=3pt]
\item $ \left |\frac{\dm}{\dm \lambda} \Phi_{\lambda_1} (\omega_1,\omega_2)\right| \leq \tilde{C}$,
\item $  \left|\frac{\dm}{\dm \lambda} \Phi_{\lambda_1} (\omega_1,\omega_2) - \frac{\dm}{\dm \lambda} \Phi_{\lambda_2} (\omega_1,\omega_2)\right|\leq C_\delta \left| \lambda_1-\lambda_2\right|^\delta$.
\end{itemize}
\end{enumerate}
\end{defin}

The proofs of Theorem~\ref{thm1} and Theorem~\ref{thm2} will be based on the following abstract projection theorem. It is a special case of of Theorem 4.9 in \cite{PS2000}.

\begin{thm}\label{thmPS}
Let $\Omega$ be a compact metric space which is bi-Lipschitz equivalent to a subset of Euclidean space; $J$ an open interval and $\Pi$ a continuous map as described in \eqref{eq_Pi}. Assume that conditions (a), (b) and (c) of Definition~\ref{def_PS} are satisfied (for some $0<\delta<1$).
Then the following statements hold for all Borel sets $A\subseteq \Omega$. \begin{enumerate}[itemsep=3pt]
\item If $\dim A > 1$, then 
\begin{enumerate}[itemsep=2pt]
\item  $\Lll^1 (\Pi_\lambda A)>0$ for $\mathscr{L}^1$-a.e. $\lambda\in  J$,
\item $ \dim\{\lambda \in  J\ : \ \mathscr{L}^1(\Pi_\lambda A)=0\}\leq 2-\min\{\dim A,1+\delta\}$.
\end{enumerate} 
\item If $\dim A \leq 1$, then 
\begin{enumerate}[itemsep=2pt]
\item  $\dim (\Pi_\lambda A)=\dim A$ for $\mathscr{L}^1$-a.e. $\lambda\in  J$,
\item For $0<\alpha\leq\dim A$, $\dim\{\lambda \in  J\ : \ \dim(\Pi_\lambda A)<\alpha\}\leq \alpha$.
\end{enumerate}
\end{enumerate}
\end{thm}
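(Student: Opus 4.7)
The plan is to use the standard potential-theoretic characterization of Hausdorff dimension,
\[
\dim E = \sup\{s : \exists \mu\in\Mm(E) \text{ with } I_s(\mu) := \iint \dm(x,y)^{-s} d\mu(x)\, d\mu(y) < \infty\},
\]
and to control the projected energies $\int_J I_s((\Pi_\lambda)_*\mu)\,d\lambda$ in terms of $I_s(\mu)$. The core analytic input is the transversality estimate
\[
\int_J \frac{d\lambda}{|\Pi_\lambda\omega_1 - \Pi_\lambda\omega_2|^s} \leq C_s\, \dm(\omega_1,\omega_2)^{-s}, \qquad 0<s<1,\ \omega_1\neq\omega_2.
\]
To derive this I would factor $\Pi_\lambda\omega_1 - \Pi_\lambda\omega_2 = \dm(\omega_1,\omega_2)\Phi_\lambda(\omega_1,\omega_2)$; on the portion of $J$ where $|\Phi_\lambda|\leq C$, transversality (b) forces $|\partial_\lambda\Phi_\lambda|\geq C$, so the sublevel set $\{\lambda\in J : |\Phi_\lambda(\omega_1,\omega_2)|<\epsilon\}$ has $\Lll^1$-measure $\lesssim \epsilon$, and a layer-cake integration gives the claim for $s<1$.

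Part (2a) then follows immediately by Fubini: for $s<\dim A$ and a Frostman measure $\mu$ on $A$ with $I_s(\mu)<\infty$, one has $\int_J I_s((\Pi_\lambda)_*\mu)\,d\lambda \leq C_s I_s(\mu) < \infty$, so $\dim\Pi_\lambda A\geq s$ for $\Lll^1$-a.e.\ $\lambda$, and letting $s\uparrow\dim A$ along a countable sequence proves (2a). For part (1a), transversality alone is just barely insufficient (there is a logarithmic divergence at the critical exponent $s=1$); I would instead show that $(\Pi_\lambda)_*\mu \in L^2(\R)$ for $\Lll^1$-a.e.\ $\lambda$ whenever $\mu$ has finite $s$-energy for some $s>1$, which implies $\Lll^1(\Pi_\lambda A)>0$. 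Via mollification this reduces to proving a uniform estimate of the form
\[
\iint \int_J \psi_\epsilon\bigl(\Pi_\lambda\omega_1-\Pi_\lambda\omega_2\bigr)\,d\lambda\, d\mu(\omega_1)\, d\mu(\omega_2) \leq C\, I_1(\mu),
\]
and it is precisely here that the $C^{1,\delta}$- and $(1,\delta)$-regularity assumptions (a) and (c) supply the extra H\"older control needed to close the gap at the critical exponent.

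For the exceptional-set bounds (1b) and (2b) I would argue by contradiction: assuming that the set $E\subset J$ of exceptional parameters has dimension strictly above the claimed threshold, fix a Frostman measure $\sigma$ on $E$ of some suitable order $\beta$ and consider the map $T : E\times A\to J\times\R$, $T(\lambda,\omega)=(\lambda,\Pi_\lambda\omega)$, equipped with the product measure $\sigma\otimes\mu$. The fiber measures of $T_*(\sigma\otimes\mu)$ coincide with $(\Pi_\lambda)_*\mu$ by disintegration, and a two-parameter variant of the transversality estimate (using $|\partial_\lambda\Phi_\lambda|\gtrsim 1$ together with the $\delta$-H\"older control on $\partial_\lambda\Phi$) bounds an appropriate mixed energy of $T_*(\sigma\otimes\mu)$ by $I_s(\mu)\, I_\beta(\sigma)<\infty$. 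Frostman-type lower bounds on the fiber dimensions then contradict exceptionality on $E$.

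The main obstacle is the passage from the sub-critical transversality estimate --- which yields (2a) directly --- to the critical/super-critical bounds (1a), (1b) and (2b). Each of these genuinely needs the quantitative regularity of Definition~\ref{def_PS}(a) and (c), rather than transversality alone, both to avoid the logarithmic divergence at the critical exponent and to quantify the exceptional sets. Obtaining the sharp exponent $2-\min\{s,1+\delta\}$ in (1b) requires the most delicate bookkeeping, relating the H\"older exponent $\delta$ to the exceptional dimension through the H\"older continuity of $\partial_\lambda\Phi$.
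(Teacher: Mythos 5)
The paper does not prove this statement at all: it is quoted verbatim as a special case of Theorem 4.9 in Peres--Schlag \cite{PS2000}, and the authors simply cite that result. So your proposal is not an alternative to an in-paper argument but an attempt to reprove the black box. Judged on those terms, your outline follows the classical potential-theoretic route (Kaufman's sublevel-set/energy argument for (2a) and (2b), Mattila's $L^2$ method for (1a)), and for those three items the strategy is sound modulo one point you gloss over: the bound $\Lll^1\{\lambda\in I:|\Phi_\lambda(\omega_1,\omega_2)|<\epsilon\}\lesssim\epsilon$ does not follow from transversality alone. Transversality gives $|\partial_\lambda\Phi_\lambda|\geq C$ on each component of the sublevel set, hence each component has length $\lesssim\epsilon$, but you must also bound the \emph{number} of components uniformly in $(\omega_1,\omega_2)$; this is exactly where the $\delta$-H\"older continuity of $\partial_\lambda\Phi$ from condition (c) enters, since it prevents $\partial_\lambda\Phi$ from changing sign arbitrarily often on a compact parameter interval. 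Without that step the layer-cake integration does not close.

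The genuine gap is (1b). The exponent $2-\min\{\dim A,1+\delta\}$, with its truncation at $1+\delta$, is not accessible by the ``Frostman measure on the exceptional set plus a mixed energy estimate'' scheme you sketch; that scheme is Kaufman's and it naturally produces the sub-critical bounds in (2b), but in the super-critical regime the quantity being controlled is a Sobolev norm of $(\Pi_\lambda)_*\mu$ rather than an $s$-energy, and the sharp dependence on $\delta$ comes out of Peres and Schlag's Littlewood--Paley decomposition and induction-on-scales bootstrap for the Sobolev dimension of the projected measures. Your closing paragraph correctly flags this as the delicate point, but the proposal as written does not contain the mechanism that actually delivers the exponent. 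If the goal is to match what this paper needs, the honest move is to do what the authors do and cite \cite{PS2000}; if you want a self-contained proof, (1a), (2a) and (2b) can be completed along your lines once the component-counting step is added, while (1b) requires importing the Sobolev-dimension machinery.
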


\section{Projection formula and Gauss map}\label{sec_proj}

We start by properly introducing our setting and give a formal definition of the Gauss map. Also, we will express the assumptions from Theorem~\ref{thm1} in terms of the Gauss map. In the second part of this section, we derive an explicit formula for the projections $P_v:\R^2\to H_v$.

\subsection{Normed spaces, convexity and Gauss map}

Consider the normed space $(\R^2,N)$ for an arbitrary norm $N$ and by $\|\ndot\|$ denote the Euclidean norm on $\R^2$. Denote closed balls with center $x_0\in\R^2$ and radius $r>0$ with respect to $N$ by  
$B_N(x_0,r):= \{x\in\R^2: N(x-x_0)\leq r\},$
and with respect to $\|\ndot\|$ by
$B_E(x_0,r):= \{x\in\R^2: \|x-x_0\|\leq r\}.$ 
We will write $\partial A$ for the boundary of a set $A\subset \R^2$ and in most cases denote $\partial B_E(0,1)$ by $\sphere^{1}$. 
Furthermore, denote the distance of sets $A,B\subseteq \R^2$ with respect to $N$ by $$\dist_N(A,B):=\inf\{ N(a-b):a\in A, \ b\in B \}$$ and analogously define $\dist_E$ depending on $\|\ndot\|$.\\

There exist many equivalent definitions for (strictly) convex subsets of Euclidean space. We will use the following one: 

\begin{defin}
A closed set $A\subset \R^2$ with non-empty interior is called \textit{convex} if for every point $x\in \partial A$, there exists a straight line $L$ through $x$ such that $A$ lies in the closed half space to one side of $L$. Moreover, in case $\partial A$ is a $C^1$-manifold, this line is unique and $L=x+T_x \partial A$. The set $A$ is called \textit{strictly convex} if in addition $L\cap A=\{x\}$.
\end{defin} 

By the definition of a normed space, the following Proposition trivially holds.

\begin{prop}\label{prop0} Let $N$ be a norm on $\R^2$, then:
\begin{enumerate} 
\item $B_N(x_0,r)=x_0+r\ndot B_N(0,1):=\{x_0+rx:x\in B_N(0,1)\}$ for all $x_0\in\R^2$ and $r>0$,
\item $B_N(0,1)$ is a convex set that contains the origin and that is mapped to itself by the antipodal map $x\mapsto -x$.
\item There exists a constant $L>0$, such that $$\tfrac{1}{L}\, B_E(0,1)\subset B_N(0,1) \subset L\, B_E(0,1).$$
\item $B_N(0,1)$ determines $N$. In other words: For any convex set $B$ with non-empty interior that contains the origin and is mapped to itself under the antipodal map, there exists a unique norm $N$ such that $B=B_N(0,1)$.
\end{enumerate}\end{prop}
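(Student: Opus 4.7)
The plan is to verify each of the four claims directly from the axioms of a norm; the entire argument is elementary, but let me indicate the ingredients in turn. For (1), absolute homogeneity of $N$ gives $N(x-x_0)\leq r\iff N((x-x_0)/r)\leq 1\iff (x-x_0)/r\in B_N(0,1)$, which rearranges to $x\in x_0+r\ndot B_N(0,1)$. For (2), convexity of $B_N(0,1)$ follows from the triangle inequality combined with positive homogeneity: if $N(x),N(y)\leq 1$ and $t\in[0,1]$, then $N(tx+(1-t)y)\leq tN(x)+(1-t)N(y)\leq 1$. The identity $N(0)=0$ places the origin inside the ball, and absolute homogeneity applied with scalar $-1$ yields antipodal symmetry.

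For (3), I would invoke the equivalence of any two norms on a finite-dimensional space, which in this two-dimensional setting can be shown by hand. One first notes that $N$ is Lipschitz with respect to $\|\ndot\|$ by writing $x=x_1 e_1+x_2 e_2$ and applying the triangle inequality, so $N$ is continuous on $\R^2$. Strict positivity of $N$ on the compact set $\sphere^1$ then yields $m:=\min_{\sphere^1}N>0$ and $M:=\max_{\sphere^1}N<\infty$, and homogeneity gives $m\|x\|\leq N(x)\leq M\|x\|$ for every $x\in\R^2$. Taking $L:=\max\{M,1/m\}$ produces the required two-sided inclusion.

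For (4), the key construction is the Minkowski functional. Given any closed convex set $B$ with non-empty interior containing $0$ and symmetric under $x\mapsto -x$, I would define
\begin{equation*}
N_B(x):=\inf\{t>0:x\in tB\}.
\end{equation*}
Convexity of $B$ forces the triangle inequality for $N_B$; antipodal symmetry of $B$ upgrades positive homogeneity to absolute homogeneity; and the observation that $0$ is actually an interior point of $B$ (the convex hull of symmetric open sets around $\pm u$ contains a neighborhood of $0$) ensures that $N_B$ is everywhere finite and vanishes only at the origin. Closedness of $B$ then gives $B=B_{N_B}(0,1)$, while uniqueness follows because any norm $\tilde N$ with $B_{\tilde N}(0,1)=B$ must satisfy $\tilde N(x)=\inf\{t>0:x\in tB\}$ by its own absolute homogeneity, forcing $\tilde N=N_B$. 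I do not anticipate any substantive obstacle anywhere in the argument.
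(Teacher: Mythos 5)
Your proposal is correct and follows the standard route; the paper itself offers no proof of this proposition, simply declaring that it ``trivially holds'' by the definition of a normed space, so your writeup just supplies the routine details (homogeneity for (1), triangle inequality plus homogeneity for (2), continuity and compactness of $\sphere^1$ for (3), and the Minkowski functional for (4)). One small caveat in (4): the fact that $N_B$ vanishes only at the origin does not follow from $0$ being an interior point of $B$, as you suggest, but from \emph{boundedness} of $B$ (for an unbounded symmetric convex set such as $\R\times[-1,1]$ the Minkowski functional is only a seminorm); this hypothesis is also tacitly omitted in the paper's statement, and should be added for the construction to yield a genuine norm.
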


This allows us to define the Gauss map for $\partial B_N(0,1)$:

\begin{prop}\label{prop1}
Let $N$ be a norm on $\R^2$ such that $N$ restricted to $\R^2\woz$ is of class $C^k$, $k\in\N$. Then $\partial B_N(0,1)$ is an $1$-dimensional $C^k$-manifold in $\R^2$. In particular, this means that the Gauss map $G:\partial B_N(0,1)\to \sphere^{1}$, which assigns the outward normal vector to each point on $\partial B_N(0,1)$, is well-defined and is given by $$G(x)=\frac{\nabla N(x)}{\|\nabla N(x)\|},$$ where $G$ is of class $C^{k-1}$.
\end{prop}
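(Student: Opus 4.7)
The idea is to view $\partial B_N(0,1)$ as the level set $N^{-1}(1)$ and apply the (regular value form of the) implicit function theorem to the $C^k$ function $N:\R^2\woz\to\R$. The critical point is to verify that $1$ is a regular value, that is, that $\nabla N(x)\neq 0$ at every $x\in\partial B_N(0,1)$. This is where Euler's identity for positively homogeneous functions enters: since $N(\lambda x)=\lambda N(x)$ for all $\lambda>0$ by the norm axioms, differentiating in $\lambda$ at $\lambda=1$ yields
\begin{equation*}
x\cdot \nabla N(x)=N(x) \qquad\text{for all } x\in\R^2\woz.
\end{equation*}
In particular, for $x\in\partial B_N(0,1)$ one obtains $x\cdot\nabla N(x)=1$, so $\nabla N(x)\neq 0$. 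The implicit function theorem then produces, around each such $x$, a $C^k$-parametrization of $N^{-1}(1)$, so $\partial B_N(0,1)$ is a $1$-dimensional $C^k$-submanifold of $\R^2$.

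Next I would identify the outward unit normal. At each $x\in\partial B_N(0,1)$ the tangent line $T_x\partial B_N(0,1)$ equals $\ker dN(x)$, which is the Euclidean orthogonal complement of $\nabla N(x)$. Hence the two unit normals at $x$ are $\pm\nabla N(x)/\|\nabla N(x)\|$. To single out the outward one, note that $B_N(0,1)=\{N\leq 1\}$ is the sublevel set of $N$ at level $1$, so $\nabla N(x)$ points in the direction of increasing $N$, i.e., away from $B_N(0,1)$. Therefore the outward unit normal is
\begin{equation*}
G(x)=\frac{\nabla N(x)}{\|\nabla N(x)\|}.
\end{equation*}

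For the regularity claim: $N\in C^k(\R^2\woz)$ implies $\nabla N\in C^{k-1}(\R^2\woz,\R^2)$, and $x\mapsto \|\nabla N(x)\|$ is a $C^{k-1}$ scalar function which we just showed to be nowhere zero on $\partial B_N(0,1)$. Dividing a $C^{k-1}$ vector field by a non-vanishing $C^{k-1}$ scalar function yields a $C^{k-1}$ map, so $G$ is of class $C^{k-1}$. The only substantive step in the plan is the non-vanishing of $\nabla N$ on $\partial B_N(0,1)$; once this is secured via Euler's identity, the submanifold property and the formula and regularity for $G$ follow by standard arguments.
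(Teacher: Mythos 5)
Your proof is correct. The paper states this proposition without proof, treating it as standard, and your argument is exactly the canonical way to fill it in: Euler's identity $x\cdot\nabla N(x)=N(x)$ (obtained by differentiating the homogeneity relation in $\lambda$) shows $\nabla N$ does not vanish on $\partial B_N(0,1)$, so $1$ is a regular value of the $C^k$ function $N$, the level set is a $C^k$ curve, and the outward normal is $\nabla N/\|\nabla N\|$ because $B_N(0,1)$ is the sublevel set $\{N\leq 1\}$. As a side remark, your identity gives $x\cdot G(x)=N(x)/\|\nabla N(x)\|>0$ for free, which is a sharper form of statement (1) of Proposition~\ref{prop2} that the paper later proves by a separate convexity argument.
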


The following propositions summarize the properties of the Gauss map that we shall need in the sequel.

\begin{prop}\label{prop2}
Let $N$ be a norm on $\R^2$, such that $N$ restricted to $\R^2\woz$ is of class $C^1$, then:
\begin{enumerate}
\item $x \ndot G(x)\neq 0$ for all $x\in \partial B_N(0,1)$, where $\ndot$ denotes the scalar product in $\R^2$.
\item $G$ is surjective. 
\item $N$ is strictly convex if and only if $G$ is injective.
\item If $N$ restricted to $\R^2\woz$ is of class $C^2$, then $G$ is of class $C^1$.
\end{enumerate}
\end{prop}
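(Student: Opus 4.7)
\textbf{Proof plan for Proposition~\ref{prop2}.} The four statements split into two that are essentially immediate once one uses the $1$-homogeneity of $N$, and two that require a short geometric argument based on supporting lines of the convex body $B_N(0,1)$.

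For (1), the plan is to apply Euler's identity. Differentiating the homogeneity relation $N(tx)=tN(x)$ in $t$ at $t=1$ yields $x\cdot\nabla N(x)=N(x)$. For $x\in\partial B_N(0,1)$ this gives $x\cdot\nabla N(x)=1$, whence
\[
x\cdot G(x) \;=\; \frac{x\cdot\nabla N(x)}{\|\nabla N(x)\|} \;=\; \frac{1}{\|\nabla N(x)\|} \;>\; 0.
\]
As a by-product, $\nabla N(x)\neq 0$ on $\partial B_N(0,1)$, so $G$ is legitimately well-defined. Statement (4) then follows immediately from Proposition~\ref{prop1} applied with $k=2$, since the normalization $w\mapsto w/\|w\|$ is smooth away from the origin.

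For (2), I would invoke compactness: given $v\in\sphere^{1}$, the continuous function $x\mapsto v\cdot x$ attains its maximum over the compact set $\partial B_N(0,1)$ at some point $x_v$, and the affine line $L_v:=\{y\in\R^2 : v\cdot y = v\cdot x_v\}$ supports $B_N(0,1)$ at $x_v$ by construction. By Proposition~\ref{prop1} the boundary $\partial B_N(0,1)$ is a $C^1$-manifold, so its supporting line at $x_v$ is unique and coincides with $x_v+T_{x_v}\partial B_N(0,1)$. Therefore $v$ is normal to $T_{x_v}\partial B_N(0,1)$ and points outward, so $v=G(x_v)$, proving surjectivity.

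For (3), I would prove the two directions separately. For $N$ strictly convex implies $G$ injective: if $G(x_1)=G(x_2)=v$, then by the argument in (2) both $x_i$ maximize $y\mapsto v\cdot y$ on $B_N(0,1)$ and so lie on the common supporting line $L_v$; by strict convexity $L_v\cap B_N(0,1)=\{x_1\}$, hence $x_2=x_1$. For the converse I argue by contraposition, and this is the step I expect to be the most delicate. If $N$ is not strictly convex, the (unique) supporting line $L$ at some $x_0\in\partial B_N(0,1)$ meets $B_N(0,1)$ at a further point $x_1$. Convexity of $B_N(0,1)$ forces $[x_0,x_1]\subset B_N(0,1)$, while $L$ being a supporting line keeps this segment out of the interior; hence $[x_0,x_1]\subset L\cap\partial B_N(0,1)$. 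For any $y$ in the relative interior of this segment the curve $\partial B_N(0,1)$ contains an open neighborhood of $y$ inside $L$, and since $T_y\partial B_N(0,1)$ is one-dimensional it must be parallel to $L$. Consequently $G(y)$ is a unit vector perpendicular to $L$ and lying on the same (outward) side of $L$ as $G(x_0)$, so $G(y)=G(x_0)$, contradicting injectivity. The main obstacle I anticipate is precisely this last piece: extracting the tangent direction at an interior point of a segment embedded in $\partial B_N(0,1)$ and verifying that the outward-normal convention forces the two Gauss images to \emph{agree} rather than be opposite. Both points rely on the $C^1$-regularity from Proposition~\ref{prop1} together with the one-sidedness of the supporting line $L$.
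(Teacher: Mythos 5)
Your proof is correct, but it takes a different route from the paper's, and it also supplies arguments the paper omits. For statement (1) the paper argues by contradiction from convexity: if $x\cdot G(x)=0$ then $x$ lies in its own tangent line, so the supporting line at $x$ passes through the origin, and the antipodal symmetry of $B_N(0,1)$ then traps the whole ball inside that line, contradicting nonempty interior. Your Euler-identity argument (differentiating $N(tx)=tN(x)$ at $t=1$ to get $x\cdot\nabla N(x)=N(x)=1$) is shorter, yields the stronger conclusion $x\cdot G(x)>0$ rather than mere non-vanishing, and as you note justifies $\nabla N\neq 0$ on the unit sphere, which the definition of $G$ in Proposition~\ref{prop1} tacitly requires; the paper's argument, by contrast, makes visible exactly which structural facts (convexity, antipodal symmetry, nonempty interior) are responsible. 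For (2)--(4) the paper simply declares the statements standard, so your supporting-line maximization argument for surjectivity and your two-directional argument for (3) (including the contrapositive step that a flat segment in the boundary forces equal Gauss images along its relative interior) go beyond what the paper records; they are sound, the only points worth making fully explicit being that a nonconstant linear functional on a convex body with nonempty interior attains its maximum only on the boundary, and that the tangent line at an interior point of a segment contained in the $C^1$ curve $\partial B_N(0,1)$ is obtained as a limit of secant directions along that segment.
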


Statement (1) is proven as follows: Assume for a contradiction that there exists $x\in \partial B_N(0,1)$ such that $x\ndot G(x)=0$. By definition of $G$, this means that $x\in T_x \partial B_N(0,1)$, and since $B_N(0,1)$ is convex, $B_N(0,1)$ lies on one side of the line $H=T_x \partial B_N(0,1)$. Thus, by antipodal symmetry of $B_N(0,1)$ and the fact that $0\in T_x \partial B_N(0,1)$, it follows that $B_N(0,1)\subset T_x \partial B_N(0,1)$. However, this contradicts the fact that $B_N(0,1)$ has non-empty interior. And thus (1) follows.
Statements (2) to (4) from Proposition~\ref{prop2}
are standard.\\[6pt]
Let $N$ be a norm on $\R^2$ such that $N$ restricted to $\R^2\woz$ is of class $C^2$. Then, by Proposition~\ref{prop0} and Proposition~\ref{prop1}, $\partial B_N(0,1)$ is a simply closed $C^2$-curve. Let $\gamma:\R\to \partial B_N(0,1)\subset \R^2$ be the counter-clockwise $C^2$-parametrization of $\partial B_N(0,1)$ by arc-length such that $\gamma(t)=\gamma(t+l)$ for all $t\in \R$ and $l$ is the Euclidean length of $\partial B_N(0,1)$. Then the curvature of $\partial B_N(0,1)$ at $x=\gamma(t)\in \partial B_N(0,1)$ is $K(\gamma(t))=\|\ddot{\gamma}(t)\|$ and the Gauss map is given by
\begin{equation}\label{G1}
G(\gamma(t))=\frac{\ddot{\gamma}(t)}{\|\ddot{\gamma}(t)\|}= \frac{\ddot{\gamma}(t)}{K(\gamma(t))}.
\end{equation}
 Moreover, we know that, since $\gamma$ is parametrized by arc-length, $\|\dot{\gamma}(t)\|=1$ and $\dot{\gamma}(t)$ is perpendicular to $\ddot{\gamma}(t)$ at any $t\in \R$. Thus we can write $G$ as
\begin{equation}\label{G2}
G(\gamma(t))=R(\dot{\gamma}(t)),
\end{equation} 
for all $t\in\R$, where $R$ is the counter-clockwise rotation about an angle $\tfrac{\pi}{2}$ in $\R^2$. 

\begin{prop}\label{prop3}
Let $N$ be a norm on $\R^2$, such that $N$ restricted to $\R^2\woz$ is of class $C^2$. Then $G:\partial B_N(0,1)\to \sphere^1$ is a $C^1$-diffeomorphism if and only if $K\neq 0$. Moreover, in this case, $B_N(0,1)$ is strictly convex.
\end{prop}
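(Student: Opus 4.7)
The plan is to exploit formulas~\eqref{G1} and~\eqref{G2} for the Gauss map and reduce the question to elementary properties of the arc-length parametrization $\gamma$ of $\partial B_N(0,1)$. Differentiating~\eqref{G2} gives $(G\circ\gamma)'(t)=R(\ddot{\gamma}(t))$, whose Euclidean norm is $\|\ddot{\gamma}(t)\|=K(\gamma(t))$. Hence the assumption $K\neq 0$ is equivalent to the tangent map of $G$ being non-singular at every point of $\partial B_N(0,1)$; since $G$ is already of class $C^1$ by Proposition~\ref{prop1}, the inverse function theorem (applied on the $1$-manifold $\partial B_N(0,1)$) shows that $G$ is a local $C^1$-diffeomorphism between the compact connected $C^1$-manifolds $\partial B_N(0,1)$ and $\sphere^{1}$. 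This identity $\|(G\circ\gamma)'\|=K$ also handles the converse direction: if $G$ is a $C^1$-diffeomorphism then $(G\circ\gamma)'$ never vanishes, so $K\neq 0$ everywhere.

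For the forward direction it remains to promote local invertibility to a global $C^1$-diffeomorphism. A local $C^1$-diffeomorphism from a compact manifold to a connected manifold without boundary is automatically a proper covering map, and Proposition~\ref{prop2}(2) already supplies surjectivity, so one only needs to check that the covering has degree one. For this I would parametrize $\dot{\gamma}(t)=(\cos\theta(t),\sin\theta(t))$ for a continuous lift $\theta\colon\R\to\R$, use~\eqref{G2} to write $G\circ\gamma = R\circ\dot{\gamma}$, and invoke the standard fact that the tangent direction of a counterclockwise-parametrized simple closed convex curve in the plane winds around $\sphere^1$ exactly once, i.e.\ $\theta(t+l)-\theta(t)=2\pi$ (the Hopf Umlaufsatz specialized to convex curves). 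Consequently, $\dot{\gamma}$ and therefore $G\circ\gamma$ have degree one, the covering is a homeomorphism, and $G$ is a global $C^1$-diffeomorphism. The moreover clause is then immediate from Proposition~\ref{prop2}(3): a diffeomorphism is injective, so $N$, hence $B_N(0,1)$, is strictly convex.

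The main obstacle I expect is precisely this upgrade from local invertibility to a global diffeomorphism. The hypothesis $K\neq 0$ is a purely pointwise condition and hence gives only local information via the inverse function theorem; the global bijectivity has to be extracted from the global structure of $\partial B_N(0,1)$ as a convex Jordan curve, which is exactly what the turning-angle/degree-one argument captures.
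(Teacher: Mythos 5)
Your proposal is correct, and the local step and the converse are exactly the paper's: differentiate \eqref{G2} to get $\tfrac{\dm}{\dm t}G(\gamma(t))=R\ddot{\gamma}(t)$, so non-vanishing of $K=\|\ddot{\gamma}\|$ is equivalent to the tangent map of $G$ being invertible, and the inverse function theorem gives a local $C^1$-diffeomorphism. Where you genuinely diverge is in upgrading local to global. The paper never touches degree theory: it observes that $K>0$ forbids any line segment in $\partial B_N(0,1)$ (a segment would have zero curvature), so $B_N(0,1)$ is strictly convex, and then Proposition~\ref{prop2}(3) converts strict convexity directly into injectivity of $G$; combined with surjectivity and the local diffeomorphism property this yields the global statement, and strict convexity is obtained along the way rather than as a corollary. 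You instead run the implication in the opposite order: local diffeomorphism plus compactness gives a covering map, the Hopf Umlaufsatz shows $G\circ\gamma=R\circ\dot{\gamma}$ has degree one, hence the covering is trivial and $G$ is a global diffeomorphism, and only then do you deduce strict convexity from injectivity via Proposition~\ref{prop2}(3). Both arguments are sound. The paper's route is more elementary and exploits the convexity hypothesis that is already packaged in Proposition~\ref{prop2}(3), at the cost of being specific to this situation; your route is the standard differential-topology argument and would work for any immersed closed curve with nowhere-vanishing turning, but it imports covering-space theory and the turning-number theorem, which is heavier machinery than the paper needs.
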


\begin{proof}[Proof of Proposition~\ref{prop3}]Assume that $K>0$ everywhere on $\partial B_N(0,1)$. 
First we show that $G$ is a local $C^1$-diffeomorphism: By \eqref{G2} and the linearity of the rotation $R$,  $\frac{\dm}{\dm t}G(\gamma(t))=R\circ \ddot{\gamma}(t)$. Thus, since $\ddot{\gamma}(t)$ is normal to $\sphere^1$ at $G(\gamma(t))$,  $\frac{\dm}{\dm t}G(\gamma(t))$ is tangent to $\sphere^1$ at $G(\gamma(t))$. (Recall that $\ddot{\gamma}(t)\neq 0$ by the assumption $K>0$.) Thus the map $\frac{\dm}{\dm t}G(\gamma(t)): T_{\gamma(t)}\partial B_N(0,1) \to T_{G(\gamma(t)} \sphere^1$ is surjective and so, by inverse function theorem, $G$ is a local $C^1$-diffeomorphism.
Now, we show that $B_N(0,1)$ is strictly convex and thus $G$ is a $C^1$-diffeomorphism. To this end, let $K>0$ and assume that $B_N(0,1)$ is not strictly convex, then, since it is convex, its boundary $\partial B_N(0,1)$ has to contain a line segment. However, this would imply, that the curvature on this part of the boundary must be zero, which contradicts our assumption. Therefore, $ B_N(0,1)$ must be strictly convex and hence, by Proposition~\ref{prop2}, $G$ is a $C^1$-diffeomorphism.
For the converse, assume that $G$ is a $C^1$-diffeomorphism and recall that $G(\gamma(t))=R\dot{\gamma}(t)$. Therefore, $\frac{\dm}{\dm t}G(\gamma(t))=R\ddot{\gamma}(t)$ is in $T_{G(\gamma(t))}\sphere^1\woz$. Hence, $\ddot{\gamma}(t)$ is non-zero for all $t\in \R$ and thus the curvature is $K(\gamma(t))=\|\ddot{\gamma}(t)\|>0$ for all $t\in\R$.
\end{proof}

\subsection{Projection formula}

Let $N$ be a strictly convex norm on $\R^2$. For any direction $v\in\sphere^{1}$, by $H_v$ we denote the the line that is perpendicular to $v$ and contains the origin: 
$$H_v:=\{w\in \R^2: v\ndot w=0\}.$$ 

Define the (closest point) projection $P_v:\R^2\to H_v$ onto $H_v$ as follows: For $x\in\R^2$ let $P_v(x)\in H_v$ such that 
$
N(x-P_v(x))=\min\{N(x-q):q\in H_v\}.$ Thus the closest point projection can be viewed as a map on the product space $\sphere^{1}\times \R^2$:  \begin{equation*}\begin{split}
P:\sphere^{1}\times \R^2 &\ \to \  \R^2\\ 
(v,x)& \ \mapsto \ P(v,x):=P_v(x)
\end{split}
\end{equation*}

Since all lines through the origin in $\R^2$ are convex sets and $N$ is strictly convex, the closest-point projection is well defined. 
Assume that $N$ is such that the Gauss map $G:\partial B_N(0,1)\to \sphere^{1}$ is well-defined and bijective. Our goal is to derive an explicit formula for $P_v$ in terms of the Gauss map $G$. For this, let $v\in \sphere^{1}$ and $x\in \R^2\backslash H_v$ arbitrary. 

\begin{rmk}\label{rmk_proj_intuitively}
Intuitively, the easiest way to think of the projection $P_v(x)$ is the following: Consider $B(x,\epsilon)$ with $0<\epsilon<\dist_N(p,H_v)$.
Start enlarging $\epsilon$ until $\partial(B_N(x,\epsilon))$ meets $H_v$. 
This will be the case when $\epsilon$ equals $\dist_N(x,H_v)$, then $P_v(x)$ will be the intersection point of $\partial(B_N(x,\epsilon))$ and $H_v$, see Figure~\ref{fig_intuition}. 
\begin{figure}[h]
\begin{center}
\def\svgwidth{250pt}
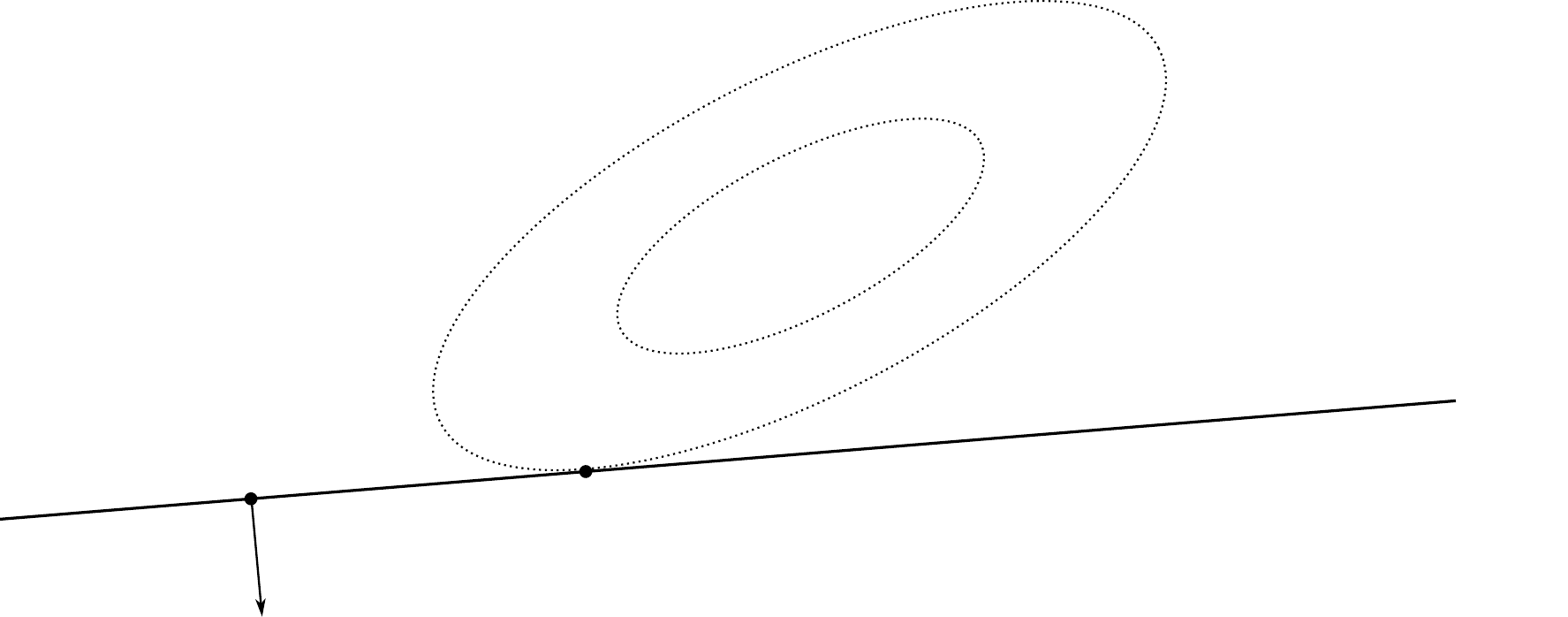
\end{center}
\caption{Intuition for the closest-point projection onto $H_v$.}\label{fig_intuition}
\end{figure}
\end{rmk}

Formally, the construction of $P$ goes as follows:
Recall that we denote by $G$ the Gauss map that assigns to each $x\in\partial B_N(0,1)$ its outward normal $G(x)\in\sphere^{1}$ and that we can express $G$ in terms of the norm $N$ (see Proposition~\ref{prop1}). Define $\bar{G}:\R^2\backslash \{0\} \to \sphere^{1}$ by $$\bar{G}(x):=G\left(\frac{x}{N(x)}\right).$$ Thus $\bar{G}$ is an extension of $G$ and therefore surjective. 
Furthermore, by homogeneity of the norm, $\bar{G}$ assigns to $x \in \R^2\backslash \{0\}$ the outward normal of $\partial B_N(0,N(x))$, and, since $G$ is bijective, $\bar{G}$ restricted to $\partial B_N(0,r)$ is bijective for all $r>0$.\\
Again let $v\in \sphere^{1}$ and $x\in \R^2\backslash H_v$, then $P_v x$ is characterized by the two following facts: First, $P_vx$ lies on the line $H_v$, this means that $P_vx$ it is perpendicular to $v$, i.e., $(P_vx)\ndot v=0$. Second, $P_v(x)\in B_N(0,\dist(x,H_v))$ and the outward normal of $B_N(x,\dist(x,H_v))$ at $P_v(x)$ is perpendicular to $H_v$. This means that, either $\bar{G}(P_v(x)-x)=v$ or $\bar{G}(P_v(x)-x)=-v$, depending on which side of $H_v$ the point $x$ lies. First, we consider the case when $x$ and $v$ lie on opposite sides of $H_v$ , i.e., when $\bar{G}(P_v(x)-x)=v$, see Figure~\ref{fig1}.
Note that, since $P_v$ is well defined, the system \eqref{eq1} has a unique solution.
Let $\tilde{q}$ be the point where the ray from $x$ in direction $q$ and the sphere $\partial B_N(x,1)$ intersect.  Then 
\begin{equation}\label{eq_for_q}
(q-x)=\lambda (\tilde{q}-x).
\end{equation}  

\begin{figure}[h]
\begin{center}
\def\svgwidth{250pt}
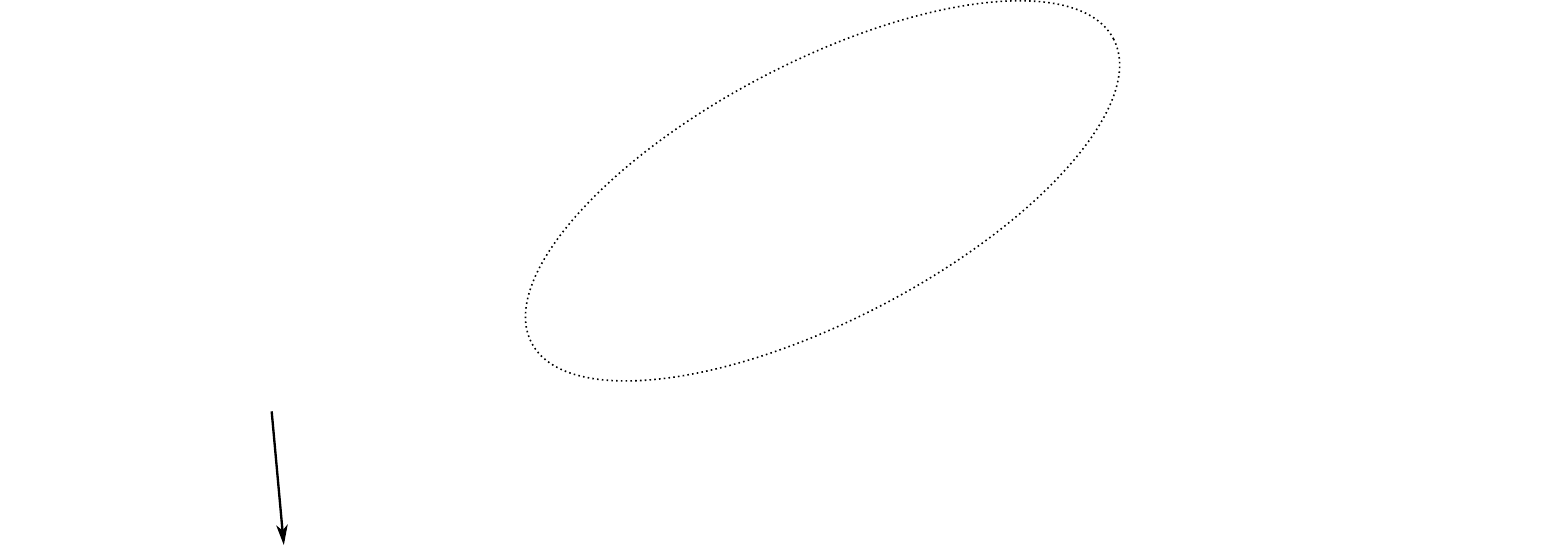
\end{center}
\caption{Closest-point projection onto $H_v$ (when $x$ and $v$ lie on the opposite side of $H_v$).}\label{fig1}
\end{figure}
Thus $P_v(x)$ is the solution $q$ of the following system of equations:
\begin{equation}\label{eq1}
\begin{cases} \bar{G}(q-x)= v \\
v\ndot q=0 \end{cases}
\end{equation}

for $\lambda=N(q-x)>0$ and $\bar{G}(q-x)=G(\tilde{q}-x)$. Thus $\bar{G}(q-x)=v$ is equivalent to $G(\tilde{q}-x)=v$ and therefore also to $\tilde{q}-x=G^{-1}(v)$. Furthermore, by \eqref{eq_for_q}: $q=x+(q-x)=x+\lambda(\tilde{q}-x)$. Hence, \eqref{eq1} is equivalent to
\begin{equation}\label{eq2}
\begin{cases} 
\tilde{q}-x=G^{-1}(v) \\
v\ndot (x+\lambda(\tilde{q}-x))=0 
\end{cases}
\end{equation} 
and since \eqref{eq1} has a unique solution, so does \eqref{eq2}. 
\begin{rmk}\label{rmk_x_times_G(x)_non-zero}
From (1) in Proposition~\ref{prop2}, compactness of $\sphere^{1}$ and continuity of $G$, it follows that $v\ndot G^{-1}(v)$ is larger than some positive constant depending on $N$. \end{rmk} 

From Remark~\ref{rmk_x_times_G(x)_non-zero} and \eqref{eq2} it follows that 
$$\lambda=-\frac{v\ndot x}{v\ndot(x-\tilde{q})}=-\frac{v\ndot x}{v\ndot G^{-1}(v)},$$
by \eqref{eq_for_q}, $$q=x+(\tilde{q}-x)\lambda=x-G^{-1}(v)\frac{v\ndot x}{v\ndot(x-\tilde{q})},$$
and therefore,
\begin{equation*}
P_v(x)=q=x-G^{-1}(v)\frac{v\ndot x}{v\ndot G^{-1}(v)}
\end{equation*}

Now consider the case when $x$ and $v$ lie on the same side of $H_v$, that is $\bar{G}(P_v(x)-x)=-v$. Then, by repeating the above argument, we obtain 
\begin{equation*}
P_v(x)=q=x-G^{-1}(-v)\frac{(-v)\ndot x}{(-v)\ndot G^{-1}(-v)}=x-G^{-1}(v)\frac{v\ndot x}{v\ndot G^{-1}(v)}.
\end{equation*}
Thus, in both cases, ($\bar{G}(P_v(x)-x)=v$ and $\bar{G}(P_v(x)-x)=-v$), we obtain the same formula and therefore conclude that:

For any $x\in\R^2$ and $v\in \sphere^{1}$, the closest point projection of $x$ onto the line $H_v$ orthogonal to $v$ is given by 
\begin{equation}\label{proj_formula}
P_v(x)=P(v,x)=x-G^{-1}(v)\frac{v\ndot x}{v\ndot G^{-1}(v)}
\end{equation}

Note that \eqref{proj_formula} in particular shows that, for fixed $v$, $P_v:\R^2\to H_v$ is a linear map. Linear maps of finite dimensional spaces are Lipschitz and hence do not increase the Hausdorff dimension of sets. Therefore, for measurable sets $A\in \R^2$, we have the following trivial estimate:
$$\dim P_vA\leq \min\{\dim A, 1 \}.$$
Also, the following lemma is an immediate consequence of \eqref{proj_formula}.

\begin{lem}\label{lem_v0}
For a point $x\in \R^2\woz$ and a direction $v\in\sphere^{1}$: $P_v(x)=0$ if and only if $x$ is co-linear to $G^{-1}(v)$. In particular, by bijectivity of $G:\partial B_N(0,1)\to \sphere^{1}$ it follows that for every point $x$ in $\R^2\woz$, there exist a vectors $v_0\in\sphere^{1}$ for which $P_{v_0}(x)=0$, and there exists only only vector $v\in \S^1\backslash\{v_0\}$ for which $P_v(x)=0$ as well, which is $v=-v_0$.
\end{lem}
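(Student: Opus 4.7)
The plan is to read the result directly off the projection formula \eqref{proj_formula} and combine it with the antipodal symmetry of the unit ball $B_N(0,1)$. For the first assertion, note that the coefficient $\lambda(v,x):=(v\ndot x)/(v\ndot G^{-1}(v))$ appearing in \eqref{proj_formula} is well defined by Remark~\ref{rmk_x_times_G(x)_non-zero}. Thus $P_v(x)=0$ immediately forces $x=\lambda(v,x)\,G^{-1}(v)$, which places $x$ on the line through $0$ and $G^{-1}(v)$. For the converse, if $x=\mu\,G^{-1}(v)$ for some $\mu\in\R\woz$, then substituting into \eqref{proj_formula} gives $P_v(x)=\mu G^{-1}(v)-G^{-1}(v)\,\mu\,(v\ndot G^{-1}(v))/(v\ndot G^{-1}(v))=0$, proving the equivalence.

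For the ``in particular'' part, fix $x\in\R^2\woz$. I would first observe that the straight line $\{tx:t\in\R\}$ meets the $N$-unit sphere $\partial B_N(0,1)$ in exactly two points, namely $y_{+}:=x/N(x)$ and $y_{-}:=-x/N(x)$; this uses only the homogeneity of $N$ and the fact that $B_N(0,1)$ is sent to itself by the antipodal map (Proposition~\ref{prop0}(2)). By the first part, the equation $P_v(x)=0$ is equivalent to $G^{-1}(v)\in\{y_+,y_-\}$, so by bijectivity of $G:\partial B_N(0,1)\to\sphere^{1}$ (Proposition~\ref{prop2}(3) together with the strict convexity coming from the curvature assumption, cf.\ Proposition~\ref{prop3}), there are precisely the two candidates $v_0:=G(y_+)$ and $v_0':=G(y_-)$.

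It remains to verify that $v_0'=-v_0$. The key step, and the only one worth spelling out, is the identity $G(-y)=-G(y)$ for $y\in\partial B_N(0,1)$. This follows from the fact that a norm satisfies $N(-x)=N(x)$, so $\nabla N(-y)=-\nabla N(y)$ at every point $y$ where $N$ is differentiable, and therefore
\begin{equation*}
G(-y)\;=\;\frac{\nabla N(-y)}{\|\nabla N(-y)\|}\;=\;-\frac{\nabla N(y)}{\|\nabla N(y)\|}\;=\;-G(y),
\end{equation*}
using the representation of $G$ given in Proposition~\ref{prop1}. Applying this with $y=y_+$ yields $v_0'=G(-y_+)=-G(y_+)=-v_0$, finishing the proof.

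I do not expect any real obstacle here: the whole statement is an algebraic consequence of formula \eqref{proj_formula} once one records the antipodal symmetry of $B_N(0,1)$ and the bijectivity of $G$ that have already been established in the excerpt.
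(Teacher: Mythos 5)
Your proof is correct and follows exactly the route the paper intends: the paper simply declares the lemma "an immediate consequence of \eqref{proj_formula}," and your argument is the natural expansion of that, reading co-linearity off the formula and using the antipodal symmetry $G(-y)=-G(y)$ (from $N(-x)=N(x)$) together with bijectivity of $G$ to identify the two solution directions as $\pm v_0$.
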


Note that by linearity of $x\mapsto P_v(x)$, the statement $P_v(x)=P_v(y)$ is equivalent to $P_v(x-y)=0$, for $x,y\in \R^2$, $x\neq y$. Therefore, by Lemma~\eqref{lem_v0}, for  given  $x,y\in \R^2$, $x\neq y$, there exists a (up to the choice between a vector $v$ and its antipodal vector $-v$) unique $v_0=v_0(x-y)$ such that $P_{v_0}(x)=P_{v_0}(y)$.

\section{Proof of the main results}\label{sec_proofs_2dim}

We begin by adjusting the terminology so that Theorem~\ref{thmPS} can be applied to our setting:
First we need to identify $\sphere^1$ with (an interval in) the real numbers. To this end, consider the parametrization $v:\R\to \sphere^1\subset \R^2$, given by $v(t)=(\cos t ,\sin t)$. Thus, $v$ parametrizes $\sphere^1$ by arc-length in counter-clockwise direction. In particular, for every $t\in \R$, $(v(t),\dot{v}(t))$ is a positively oriented orthonormal basis of $\R^2$.
Recall that we denote the counter-clockwise rotation in $\R^2$ by an angle $\tfrac{\pi}{2}$ by $R$. Thus for all $v\in \sphere^1$, $(v,Rv)$ is a positively oriented orthonormal basis of $\R^2$, and for $v(t)$ with $t\in\R$, $\dot{v}(t)=Rv(t)$. Recall that for all $v\in \sphere^1$, we denote the line in $\R^2$ through $0$ perpendicular to $v$ by $H_v$. Thus, in particular, $H_v$ can be written as $H_v=\{tRv:t\in \R\}$. Furthermore, recall that $P(v,x)$ denotes the closest point projection of a point $x\in \R^2$ onto $H_v$ with respect to $N$, and that equation \eqref{proj_formula} is an explicit formula for $P(v,x)$. \\[3pt]
Recall that by $v\ndot w$ we denote the scalar product of two vectors $v$ and $w$ in Euclidean space and define $\Pi:\R\times \R^2 \to \R$ by 
\begin{equation}\label{def_Pi}
\Pi(t,x):= P(v(t),x)\cdot Rv(t),
\end{equation}
In the following remark, we list some properties of $\Pi$ that will be used in the sequel.

\begin{rmk}\label{rmk_properties_Pi} Assume that the restriction of the norm $N$ to $\R^2\woz$ is of class $C^{2,\delta}$ for some $\delta>0$ and that the Gauss map $G$ is a $C^1$-diffeomorphism. Consider $\Pi:\R\times\R^2\to\R$ $(t,x)\to \Pi(t,x)$, as defined in \eqref{def_Pi}. Then:
\begin{enumerate}[label=(\roman*), itemsep=3pt]
\item $\Pi$ is $2\pi$-periodic in $t$, that is: $\Pi(t,x)=\Pi(t+2\pi,x)$ for all $t\in \R$ and $x\in \R^2$.
\item For fixed $t\in \R$, $x\to \Pi(t,x)$ is linear and thus in particular a $C^\infty$ mapping.
\item Obviously, $t\mapsto \Pi(t,x)$ is a combination of products, quotients with non-vanishing denominator and composition of mappings of class $C^{1,\delta}$. (In particular, the fact that we chose $N$ to be of class $C^{2,\delta}$ outside of zero, implies that $G=\frac{\nabla N}{\|\nabla N\|}$ is of class $C^{1,\delta}$.)\\ Thus, for fixed $x\in \R^2$, $t\mapsto\Pi(t,x)$ is of class $C^{1,\delta}$
(see Section~\ref{sec_hoelder}).
\item As a conclusion of the two properties above: $\Pi$ is of class $C^{1,\delta}$ and in particular, $\Pi$ and $\frac{\dm}{\dm t}\Pi$ are continuous. 
\end{enumerate}
\end{rmk}

Let  $J\subset \R$ be an open interval and let $\Omega$ be a (Euclidean) ball of large radius, centered at the origin. The following is our main technical result.
\begin{thm}\label{mainlemma}
Under the assumptions of Theorem~\ref{thm1}, the family of projections $\Pi:J\times \Omega\to \R$ satisfies conditions (a), (b) and (c) from  Definition~\ref{def_PS}.
\end{thm}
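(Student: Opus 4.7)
The key observation that unifies the whole argument is that by Remark~\ref{rmk_properties_Pi}(ii), $x\mapsto \Pi(t,x)$ is linear, so for distinct $x_1,x_2\in\Omega$,
$$\Phi_t(x_1,x_2)=\frac{\Pi(t,x_1)-\Pi(t,x_2)}{\|x_1-x_2\|}=\Pi(t,w), \qquad w:=\frac{x_1-x_2}{\|x_1-x_2\|}\in\sphere^1.$$
Combined with $2\pi$-periodicity of $\Pi$ in $t$, this reduces conditions (b) and (c) to statements about the \emph{reduced} map $(t,w)\mapsto \Pi(t,w)$ on the compact set $[0,2\pi]\times \sphere^1$, replacing pairs of distinct points in $\Omega$ by a single unit vector $w$.

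For condition (a), I would plug \eqref{proj_formula} into \eqref{def_Pi} and verify $C^{1,\delta}$-regularity in $t$ uniformly in $x\in\Omega$. The ingredients are: $v(t)=(\cos t,\sin t)\in C^\infty$; $G=\nabla N/\|\nabla N\|\in C^{1,\delta}$ from Proposition~\ref{prop1}; and $G^{-1}\in C^{1,\delta}$ obtained from Proposition~\ref{prop3} (the curvature assumption makes $G$ a $C^1$-diffeomorphism) combined with the Hölder inverse function theorem (Theorem~\ref{thm_InverseFunctionTheorem}) in local charts on $\partial B_N(0,1)$. The denominator $v(t)\cdot G^{-1}(v(t))$ is uniformly bounded away from zero by Remark~\ref{rmk_x_times_G(x)_non-zero}. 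Since $C^{1,\delta}$ is closed under products, quotients with non-vanishing denominator and composition (Section~\ref{sec_hoelder}), condition (a) follows; uniform boundedness on compact intervals is immediate from compactness of $\Omega$.

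Condition (b) is the heart of the argument. For fixed $w\in\sphere^1$, Lemma~\ref{lem_v0} identifies the zeros of $t\mapsto \Pi(t,w)$ as exactly the two parameters $t_0,t_0+\pi$ for which $w$ is collinear to $u(t_0):=G^{-1}(v(t_0))$. My plan is to compute $\frac{d}{dt}\Pi(t_0,w)$ directly from \eqref{proj_formula}, using three identities: (i) $\dot v(t)=Rv(t)$ and $R^2=-\mathrm{Id}$; (ii) $v(t)\cdot \dot u(t)=0$, since $u(t)$ traces $\partial B_N(0,1)$ with Euclidean outward normal $v(t)$; and (iii) $\dot u(t)=-Rv(t)/K(u(t))$, which I would read off by differentiating $v(t)=R\dot\gamma(s(t))$ from \eqref{G1}--\eqref{G2} and extracting $\dot s(t)=1/K(u(t))$. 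After substantial cancellation driven by the collinearity of $w$ and $u(t_0)$, the computation should collapse to
$$\left|\tfrac{d}{dt}\Pi(t_0,w)\right|=\frac{1}{K(u(t_0))\,\|u(t_0)\|}.$$
The strict positivity of $K$ on the compact curve $\partial B_N(0,1)$ and the Euclidean boundedness of $\|u\|$ there yield a uniform positive lower bound. A standard compactness-contradiction argument on $[0,2\pi]\times \sphere^1$ then propagates this lower bound from the exact zero set of $\Pi$ to a neighborhood $\{|\Pi(t,w)|\leq C\}$, producing the transversality constant in (b).

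Condition (c) is inherited from (a) essentially for free: since $\Phi_t(x_1,x_2)=\Pi(t,w)$ only depends on $w\in\sphere^1$, both the $L^\infty$-bound on $\frac{d}{dt}\Phi_t$ and its $\delta$-Hölder estimate in $t$ follow at once from the $C^{1,\delta}$-regularity of $\Pi$ together with compactness of $\sphere^1$. The main obstacle is the curvature-driven cancellation in the derivative computation for (b); this is the one place where the positivity of $K$ is used in an essential way, since a vanishing curvature at some point of $\partial B_N(0,1)$ would force $\|\dot u\|$ to blow up and destroy the uniform lower bound in the transversality inequality.
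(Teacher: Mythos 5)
Your proposal is correct and follows essentially the same route as the paper: reduce (b) and (c) to the unit sphere via linearity, verify $C^{1,\delta}$-regularity by the H\"older calculus and the inverse function theorem, prove non-degeneracy of $\frac{\dm}{\dm t}\Pi$ at exact zeros using the cancellation $\dot\psi(t_0)=0$ coming from collinearity of $w$ with $G^{-1}(v(t_0))$, and upgrade to a uniform transversality constant by compactness. The only (harmless) differences are that you normalize by the Euclidean rather than the $N$-norm, and that you make the paper's abstract statement ``$DG^{-1}(v(t_0))(\dot v(t_0))\cdot Rv(t_0)\neq 0$'' quantitative as $\left|\frac{\dm}{\dm t}\Pi(t_0,w)\right|=\frac{1}{K(u(t_0))\,\|u(t_0)\|}$, which is a correct and slightly sharper version of the same computation.
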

We continue by proving Theorem~\ref{mainlemma} and at the end of this section deduce Theorem~\ref{thm1} and Theorem~\ref{thm2} from it.\\[6pt]
As we will shall prove now, the conditions from Definition~\ref{def_PS} can be simplified by an essential amount, using the linearity of the projections: By linearity of $x\mapsto P(v,x)$ for fixed $v\in \sphere^{1}$, it follows that the function $\Phi$ defined in \eqref{def_Phi} in our setting can be written as \begin{equation}\label{def_Phi_linear}
\Phi_t(x, y)=\frac{\Pi(t,x)-\Pi(t,y)}{N(x-y)}=\Pi\left(t,\frac{x-y}{N(x-y)}\right),
\end{equation}
for all $x\neq y\in \R^2$ and $t\in J$.
Note that for all $x\neq y\in \R^2$, $\frac{x-y}{N(x-y)}\in \partial B_N(0,1)$. Thus in order to check certain properties for $\Phi$, by \eqref{def_Phi_linear}, it suffices to very them for $\Pi$ restricted to $J\times \partial B_N(0,1)$. Also recall from Remark~\ref{rmk_properties_Pi} that whenever $N$ is a norm that satisfies the assumptions from Theorem~\ref{thm1}, then $\Pi:\R \times \R^2 \to \R$, given by \eqref{proj_formula} and \eqref{def_Pi}, is continuous and $\lambda\mapsto \Pi(\lambda,x)$ is differentiable for $\lambda\in \R$. Now we consider the restriction $\Pi:\bar{J} \times \Omega \to \R $.

\begin{lem}\label{lem_simplify_cond} Let $N$ be a norm that satisfies the assumptions from Theorem~\ref{thm1} and assume that $\Pi:\bar{J} \times \Omega \to \R$ defined in \eqref{proj_formula} and \eqref{def_Pi}  has the following additional properties: \begin{enumerate}[label=(P\arabic*)]
\item $\Pi:\bar{J}\times \Omega \to \R$ is bounded.
\item There exists $0<\delta<1$ and $C_{\delta}>0$ such that 
$\left|\frac{\dm}{\dm t} \Pi(t_1,x) - \frac{\dm}{\dm t} \Pi(t_2, x)\right|\leq C_{\delta} \left| t_1-t_2\right|^\delta,$ for all $x\in \Omega$, $t_1,t_2\in \bar{J}$.
\item Whenever $\Pi(t,x)=0$ for some $t\in \bar{J}$ and $x\in \partial B_N(0,1)$, then $\left|\frac{\dm}{\dm t}\Pi(t,x)\right|>0$.
\end{enumerate}
Then $\Pi:J\times \Omega\to \R$ satisfies the conditions (a), (b) and (c) from Definition~\ref{def_PS}. \end{lem}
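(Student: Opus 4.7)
\textbf{Proof plan for Lemma~\ref{lem_simplify_cond}.} The strategy is to exploit the linearity of $x \mapsto P(v,x)$ for fixed $v$, as recorded in \eqref{def_Phi_linear}, in order to reduce each of the three conditions to a statement about $\Pi$ on the compact set $\bar J \times \partial B_N(0,1)$. Since $\Omega$ is a large Euclidean ball centred at the origin, we have $\partial B_N(0,1) \subset \Omega$, and both $\Pi$ and $\partial_t \Pi$ are continuous on $\R \times \R^2$ by Remark~\ref{rmk_properties_Pi}(iv). Continuity together with compactness of $\bar J \times \partial B_N(0,1)$ will do most of the work.

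For condition~(a), fix a compact subinterval $I \subset J$. Boundedness of $\partial_t \Pi$ on $I \times \Omega$ follows from continuity and compactness, while the H\"older estimate for $t \mapsto \partial_t \Pi(t, x)$ is exactly (P2) restricted to $I$. For condition~(c), the identity \eqref{def_Phi_linear} gives $\partial_t \Phi_t(x,y) = \partial_t \Pi(t, z)$ with $z = (x-y)/N(x-y) \in \partial B_N(0,1)$; the first bullet of (c) amounts to boundedness of $\partial_t \Pi$ on $\bar J \times \Omega$, and the second is (P2) applied at the fixed point $z \in \Omega$. In fact both (a) and (c) follow without using the threshold hypothesis $|\Phi_{\lambda_1}| + |\Phi_{\lambda_2}| \leq C$, so (c) is essentially a free consequence of (P1) and (P2).

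The real content lies in condition~(b), which I would establish by compactness and contradiction. Using \eqref{def_Phi_linear} again, transversality reduces to producing $C > 0$ such that for all $t \in \bar J$ and $z \in \partial B_N(0,1)$, the inequality $|\Pi(t, z)| \leq C$ forces $|\partial_t \Pi(t, z)| \geq C$. If no such constant existed, one could extract sequences $t_n \in \bar J$ and $z_n \in \partial B_N(0,1)$ with $|\Pi(t_n, z_n)| \to 0$ and $|\partial_t \Pi(t_n, z_n)| \to 0$. Passing to a subsequence by compactness and invoking continuity of $\Pi$ and $\partial_t \Pi$ would yield a limit point $(t^*, z^*) \in \bar J \times \partial B_N(0,1)$ with $\Pi(t^*, z^*) = 0$ and $\partial_t \Pi(t^*, z^*) = 0$, in direct contradiction with (P3).

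The obstacle here is philosophical rather than technical: the compactness reduction is routine, but the entire transversality content is packed into hypothesis (P3). The genuine labour in proving Theorem~\ref{mainlemma} therefore does not lie in the present simplification lemma but in the subsequent verification of (P3) from the $C^{2,\delta}$ regularity and positive curvature of $N$, where the explicit formula \eqref{proj_formula} and the properties of the Gauss map from Section~\ref{sec_proj} will be essential.
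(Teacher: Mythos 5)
Your proposal is correct and follows essentially the same route as the paper: conditions (a) and (c) are read off from (P1), (P2), the linearity reduction \eqref{def_Phi_linear} and the continuity of $\Pi$ and $\frac{\dm}{\dm t}\Pi$ from Remark~\ref{rmk_properties_Pi}, while (b) is obtained by the identical compactness-and-contradiction argument on $\bar{J}\times \partial B_N(0,1)$ against (P3). Your closing observation that the real work is deferred to verifying (P3) matches the paper's structure, where that verification is carried out via Lemma~\ref{lem_transv} in the proof of Theorem~\ref{mainlemma}.
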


\begin{proof} 
(P1) and (P2) immediately imply (a). Now, consider the restricted map $\Pi:J\times \partial B_N(0,1)\to \R$ and note that  by linearity \eqref{def_Phi_linear}: (P1) and (P2) imply (c). 

In order to prove (b), consider the restriction $\Pi:\bar{J}\times \partial B_N(0,1)\to \R$. Note that in order to establish (b), by linearity \eqref{def_Phi_linear}, it suffices to show that: There exists $C>0$ such that for $t\in \bar{J}$, $x\in \partial B_N(0,1)$ with $|\Pi(t,x)|<C$, it follows that $\left|\frac{\dm}{\dm t}\Pi(t,x)\right|>C$.
Assume, that this is not the case. Then, for every $n\in \N$ there exist $t_n\in \bar{J}$ and $x_n\in \partial B_N(0,1)$ such that $|\Pi(t_n,x_n)|<\tfrac{1}{n}$ and $\left|\frac{\dm}{\dm t}\Pi(t_n,x_n)\right|\leq\tfrac{1}{n}$. Since, $\bar{J}\times \partial B_N(0,1)$ is compact, the sequence $\{(t_n,x_n)\}_{n\in\N}$ admits a convergent subsequence with limit $(t,x)\in \bar{J}\times \partial B_N(0,1)$. Then by continuity of $\Pi$ and $\frac{\dm}{\dm t}\Pi$ (see Remark~\ref{rmk_properties_Pi}), $|\Pi(t,x)|=0$ and $\left|\frac{\dm}{\dm t}\Pi(t,x)\right|=0$, which contradicts (P3).
\end{proof}

Consider $P: \sphere^{1} \times \R^2\to \R^2, \ \ (v,x)\mapsto P(v,x),$ given by \eqref{proj_formula}.
The following lemma is the key tool for the prove of Theorem~\ref{mainlemma}. We will employ it in order to establish property (P3) for $\Pi:\bar{J}\times \R^2 \to \R$. 

\begin{lem}\label{lem_transv} Assume that $N$ restricted to $\R^2\woz$ is of class $C^2$ and $G$ is a $C^1$-diffeomorphism. 
Let $x\in \R^2\woz$ and $v_0\in \sphere^1$, be one of the two directions for which $P(v_0,x)=0$ and  $t_0\in \R$ such that $v_0=v(t_0)$. Then, the differential $\frac{\dm}{\dm t}P(v(t_0),x)\in\R^2$ is non-zero.
\end{lem}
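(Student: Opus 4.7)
The plan is to differentiate the explicit projection formula~\eqref{proj_formula} and, after one algebraic reorganisation, reduce the conclusion to the Gauss-map nondegeneracy encoded in Proposition~\ref{prop2}(1). Write $w(t):=G^{-1}(v(t))$, so that $P(v(t),x)=x-w(t)\,f(t)$ with $f(t):=\frac{v(t)\ndot x}{v(t)\ndot w(t)}$. Since $P(v_0,x)=0$, Lemma~\ref{lem_v0} gives $x=\mu\,w(t_0)$ for some $\mu\neq 0$, and substituting this into $f$ shows $f(t_0)=\mu$.

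Differentiating $P$ at $t_0$ and invoking the collinearity $x=\mu w(t_0)$ inside the expression for $\dot f(t_0)$ (replacing both $\dot v(t_0)\ndot x$ by $\mu\,\dot v(t_0)\ndot w(t_0)$ and $v_0\ndot x$ by $\mu\,v_0\ndot w(t_0)$) causes two cross-terms to cancel, leaving
\[
\dot f(t_0) \;=\; -\mu\,\frac{v_0\ndot\dot w(t_0)}{v_0\ndot w(t_0)}.
\]
Inserting this into $\frac{\dm}{\dm t}P(v(t_0),x) = -\dot w(t_0)f(t_0) - w(t_0)\dot f(t_0)$ yields
\[
\frac{\dm}{\dm t}P(v(t_0),x) \;=\; -\mu\left[\dot w(t_0) - w(t_0)\,\frac{v_0\ndot\dot w(t_0)}{v_0\ndot w(t_0)}\right] \;=\; -\mu\, P(v_0,\dot w(t_0)),
\]
the bracket being nothing but formula~\eqref{proj_formula} applied to the vector $\dot w(t_0)$.

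It therefore suffices to show $P(v_0,\dot w(t_0))\neq 0$, and by Lemma~\ref{lem_v0} this amounts to showing that $\dot w(t_0)$ is nonzero and not collinear to $w(t_0)=G^{-1}(v_0)$. The nonvanishing of $\dot w(t_0)$ follows from $\dot v(t_0)=Rv_0\neq 0$ together with the fact that $G^{-1}$ is a $C^1$-diffeomorphism under our assumptions (Proposition~\ref{prop3}, which is where the strict positivity of the curvature is used), so that $\dot w(t_0)=DG^{-1}(v_0)[\dot v(t_0)]\neq 0$. Since $w(t)\in\partial B_N(0,1)$ for all $t$, the vector $\dot w(t_0)$ lies in $T_{w(t_0)}\partial B_N(0,1)$ and hence is perpendicular to the outward normal $G(w(t_0))=v_0$. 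If $\dot w(t_0)$ were a scalar multiple of $w(t_0)$, then $w(t_0)\ndot v_0 = G^{-1}(v_0)\ndot v_0 = 0$, contradicting Proposition~\ref{prop2}(1). The conclusion $\frac{\dm}{\dm t}P(v(t_0),x)\neq 0$ follows.

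The main obstacle is the bookkeeping in step two: one has to use the collinearity $x=\mu w(t_0)$ at exactly the right two places in $\dot f(t_0)$ to make the cross-terms cancel and to recognise the result as a second instance of the projection formula. Once this identity $\frac{\dm}{\dm t}P(v(t_0),x) = -\mu P(v_0,\dot w(t_0))$ is visible, the argument becomes purely geometric, and both hypotheses of Theorem~\ref{thm1} ($C^{2,\delta}$-regularity and strict positivity of the curvature) enter only through Propositions~\ref{prop2}(1) and~\ref{prop3}.
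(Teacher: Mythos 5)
Your proof is correct and follows essentially the same route as the paper: differentiate the explicit formula \eqref{proj_formula}, use the collinearity $x=\mu\,G^{-1}(v_0)$ from Lemma~\ref{lem_v0} to simplify the derivative of the scalar factor, and conclude nonvanishing from $DG^{-1}(v_0)(\dot v(t_0))\neq 0$. The only cosmetic difference is at the end: the paper shows the scalar factor has vanishing derivative (so the result is exactly $-\psi(t_0)DG^{-1}(v_0)(\dot v(t_0))$), whereas you repackage the derivative as $-\mu P(v_0,\dot w(t_0))$ and rule out vanishing via Lemma~\ref{lem_v0} and Proposition~\ref{prop2}(1) --- both arguments rest on the same facts, since $v_0\ndot\dot w(t_0)=0$ makes the two expressions identical.
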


\begin{proof}
Define the  map $\psi:\R\to \R$ by
$$\psi(t):=\frac{v(t) \ndot x}{v(t)\ndot G^{-1}(v(t))},$$
and fix $x\in\R^2\woz$, $t_0\in\R$ and $v_0=v(t_0)$ such that $P(v_0,x)=0$. The proof of Lemma~\ref{lem_transv} is based on the following fact:\\[3pt]
\underline{Claim:} $\psi(t_0)\neq 0$ and $\dot{\psi}(t_0)=0.$\\[3pt]
\underline{Proof of the Claim:} 
Since $x\neq 0$ and $P(v(t_0),x)=0$, \eqref{proj_formula} immediately implies that $\psi(t_0)\neq 0$. Now consider $\dot{\psi}(t)$ for $t\in \R$:
\begin{equation*}
\dot{\psi}(t)= \frac{ (\dot{v} (t)\ndot x)[v(t)\ndot G^{-1}(v(t))]
-(v(t)\ndot x)\left[\dot{v}(t)\ndot G^{-1}(v(t)) + v(t)\ndot DG^{-1}(v(t))(\dot{v}(t)) \right]  }{[v(t)\ndot G^{-1}(v(t))]^2}
\end{equation*}
Since $DG^{-1}(v(t))(\dot{v}(t))\in T_{G^{-1}(v(t))}\partial B_N(0,1)$ and $v(t)$ is orthogonal to $T_{G^{-1}(v(t))}\partial B_N(0,1)$, for all $t$, it follows that $v(t)\ndot DG^{-1}(v(t))(\dot{v}(t))=0,$  and hence 
\begin{equation*}
\dot{\psi}(t)= \frac{ (\dot{v} (t)\ndot x)\left[v(t)\ndot G^{-1}(v(t))\right]
-(v(t)\ndot x)\left[\dot{v}(t)\ndot G^{-1}(v(t))  \right]  }{[v(t)\ndot G^{-1}(v(t))]^2},
\end{equation*}
for all $t\in \R$.
Since $v(t)$ parametrizes $\sphere^1$ by arc-length (in counter-clockwise direction), $(v(t), \dot{v}(t))$ is a (positively oriented) orthonormal basis. This implies that
$$(\dot{v} (t)\ndot x)(v(t)\ndot G^{-1}(v(t))
-(v(t)\ndot x)(\dot{v}(t)\ndot G^{-1}(v(t))) = x\ndot R[G^{-1}(v(t))]$$
Recall that $R$ denotes the rotation in $\R^2$ by angle $\tfrac{\pi}{2}$. Now consider $t_0\in \R$ such that $v_0=v(t_0)$. Then, by Lemma~\ref{lem_v0},\, $  x\ndot R[G^{-1}(v(t_0))]=0$ and $$\dot{\psi}(t_0)=\frac{x\ndot R[G^{-1}(v(t_0))]}{ [v(t_0)\ndot G^{-1}(v(t_0))]^2 } = 0.$$
This proves the claim.\\[3pt]
Set $\psi(t):=\frac{v(t) \ndot x}{v(t)\ndot G^{-1}(v(t))}$. Now, we obtain,
\begin{equation*}
\begin{split}
\frac{\dm}{\dm t}P(v(t_0),x)
&= \ \frac{\dm}{\dm t}_{\big| _{t=t_0}}\left(  x-\psi(t) \,G^{-1}(v(t))\right)\\
&= \ \left(-\dot{\psi}(t)G^{-1}(v(t))-\psi(t)\, DG^{-1}(v(t))(v'(t))\right)\big|_{t=t_0}\\
& = \ -\dot{\psi}(t_0)G^{-1}(v(t_0))-\psi(t_0)\, DG^{-1}(v(t_0))(\dot{v}(t_0)),
\end{split}
\end{equation*} and thus, by the above claim,
\begin{equation}\label{eq_a1}
\frac{\dm}{\dm t}P(v(t_0),x)=-\psi(t_0)\, DG^{-1}(v(t_0))(\dot{v}(t_0)).
 \end{equation}
where $DG^{-1}(v)$ denotes the differential of the inverse Gauss map $G^{-1}$ at a point $v\in \sphere^1$.
Since $G^{-1}$ is a $C^1$-diffeomorphism and $\dot{v}(t)\neq 0$ for all $t\in \R$, $DG^{-1}(v(t_0))(\dot{v}(t_0))\neq 0$. Therefore, by the above claim, $\frac{\dm}{\dm t}P(v(t_0),x)\neq 0$
\end{proof}

\begin{proof}[Proof of Theorem~\ref{mainlemma}] By Proposition~\ref{prop3} and Lemma~\ref{lem_simplify_cond}, it suffices to establish properties (P1), (P2) and (P3) for the projection family $\Pi:\bar{J}\times \Omega \to \R$, where $N$ restricted to $\R^2\woz$ is of class $C^{2,\delta}$ and $G$ is a $C^1$-diffeomorphism. Recall from \eqref{proj_formula} and \eqref{def_Pi} that $$P_v(x)=x-G^{-1}(v)\frac{v\cdot x}{v\cdot G^{-1}(v)},$$ and $\Pi(t,x)=P(v(t),x)\cdot Rv(t),$ for $t\in \bar{J}$ and $x\in \Omega$. By Remark~\ref{rmk_x_times_G(x)_non-zero}, the compactness of $\sphere^1$ and $\Omega$, as well as the continuity of $G^{-1}$, there exists a constant $c>0$ such that $\|P_v(x)\|<c$ for all $x\in \Omega$ and $v\in \sphere^1$. Thus, since for all $v\in \sphere^1$ the mapping $w\mapsto w\cdot Rv$ restricted to $H_v$ is an isometry $H_v\to \R$, $\Pi: \bar{J}\times \Omega\to \R$ is bounded. This proves (P1).\\[3pt]
Fix $x\in \Omega$. 

Recall that by Remark~\ref{rmk_properties_Pi}, $t\mapsto \Pi(t,x)$ is of class $C^{1,\delta}$, that is, there exists a constant $C_{\delta,x}$ such that for all $t_1,t_2\in \bar{J}$
\begin{equation}\label{eq10}
\left|\frac{\dm}{\dm t} \Pi(t_1,x) - \frac{\dm}{\dm t} \Pi(t_2, x)\right|\leq C_{\delta,x,I} \left| t_1-t_2\right|^\delta
\end{equation}
Then, since $x\mapsto \Pi(t,x)$ is linear (for all $t\in J$), so is $x\mapsto\frac{\dm}{\dm t} \Pi(t, x)$ and hence,  $C_{\delta,I,x}$ in \eqref{eq10} can be replaced by a constant $C_{\delta}>0$ that is independent of $x\in \Omega$. This proves (P2).\\[3pt]
Let $x\in \R^2$ and $t_0\in \bar{J}$ such that $\Pi(t_0,x)=0$. Recall from \eqref{def_Pi}, that $\Pi(t,x)=P(v(t),x)\ndot Rv(t)$. The product rule for derivations yields that
\begin{equation}\label{eq_11}
\frac{\dm}{\dm t}\Pi(t_0,x)
=\left(\frac{\dm}{\dm t}P(v(t_0),x)\right)\cdot Rv(t_0)   +     P(v(t_0),x)\cdot R\dot{v}(t_0).
\end{equation}
However, $P(v(t_0),x)=0$ by assumption and hence by \eqref{eq_11} and \eqref{eq_a1},
\begin{equation*}
\frac{\dm}{\dm t}\Pi(t_0,x)
=\left(\frac{\dm}{\dm t}P(v(t_0),x)\right)\cdot Rv(t_0)=\left(-\psi(t_0)\, DG^{-1}(v(t_0))(\dot{v}(t_0))\right) \cdot Rv(t_0) . 
\end{equation*}
Recall that $DG^{-1}(v(t_0))(\dot{v}(t_0))$ is an element of the tangent line $T_{G^{-1}(v(t_0))}\partial B_N(0,1)$ and that $v(t_0)$, by definition of $G$, $v(t_0)$ is orthogonal to this line. Furthermore, $DG^{-1}(v(t_0))$, $Rv(t_0)$ and $\psi(t_0)$ (we have established this in the proof of Lemma~\ref{lem_transv}) are non-zero. Therefore, we conclude that $\frac{\dm}{\dm t}\Pi(t_0,x)\neq 0$. This proves (P3).
\end{proof}

Note that in order to establish property (P3)  and thus transversality (that is property (b) from Definition~\ref{def_PS}), we have only used the assumption that $N$ restricted to $\R^2\woz$ is of class $C^{2}$. The extra $\delta$ was needed in order to obtain the required regularity for our projection family. 
 
 \begin{proof}[Proof of Theorem~\ref{thm1}] By Theorem~\ref{mainlemma}, the projection family $\Pi:J\times \Omega\to\R$ defined by \eqref{def_Pi} satisfies Definition~\ref{def_PS} and
therefore we can apply Theorem~\ref{thmPS}. Note that by definition of $\Pi$, for all $t\in J$ and $A\subset \R^2$ measurable, the sets $P(v(t),A)$ and $\Pi(t,A)$ are isometric. Moreover, $v:J\to\sphere^1$ is a local isometry. Recall that isometries between metric spaces are known to preserve Hausdorff measure and dimension, and moreover, local isometries are known to preserve Hausdorff dimension. Without loss of generality, assume that $J$ is such that $v(J)=\sphere^1$. Then, Theorem~\ref{thm1} follows from Theorem~\ref{thmPS}.
\end{proof}

\begin{proof}[Proof of Theorem~\ref{thm2}] By Theorem 1.2 in \cite{HJJL2012}, from Theorem~\ref{mainlemma} it follows that: A $\Hhh^1$-measurable set $A\subset \Omega$ with $\Hhh^1(A)<\infty$ is purely $1$-unrectifiable if and only if $\Hhh^1(\Pi(t,A))=0$ for $\Lll^1$-a.e. $t\in J$. Then using the same arguments about local isometries as in the proof of Theorem~\ref{thm1}, Theorem~\ref{thm2} follows.
\end{proof}

\section{Application to $p$-norms}\label{sec_examples}
In this section we intend to apply Theorem 1.1 to the case of the $p$-norms in $\R^2$. 
We will discuss the impact of this class of examples on Theorem~\ref{thm1} in the final remarks, see Section~\ref{sec_final}.\\

For $1\leq p <\infty$, $(x,y)\in \R^2$, the $p$-norm on $\R^2$ is defined by
\begin{equation*}
\|(x,y)\|_p :=(|x|^p+|y|^p)^\frac{1}{p},
\end{equation*}
Moreover, we define 
\begin{equation*}
\|(x,y)\|_\infty  :=\max \{|x|,|y|\}.
\end{equation*}

Note that $\|\ndot\|_2$ is the Euclidean norm, and also that,
for fixed $(x,y)\in\R^2$, $\|(x,y)\|_\infty$ is the limit of $\|(x,y)\|_p$ when $p\rightarrow \infty$. Therefore we will also refer to $\|\ndot\|_\infty$ as $\|\ndot\|_p$ for $p=\infty$. 
Figure~\ref{fig_balls} illustrates the shape of the unit ball $B_{\|\ndot\|_p}(0,1)$ for different values of $p$.

\begin{figure}[h]
\begin{center}
\def\svgwidth{400pt}
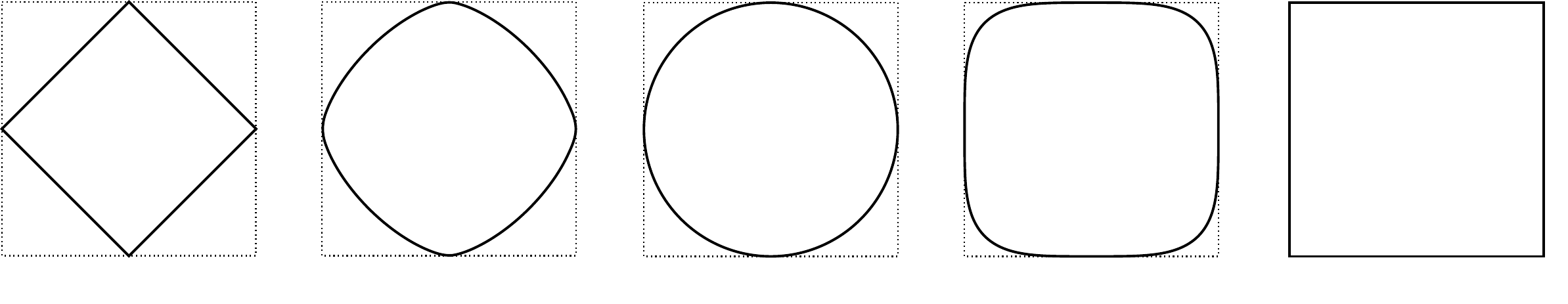
\end{center}
\caption{Shape of $B_{\|\ndot\|_p}(0,1)$ for different values of $p$ } \label{fig_balls}
\end{figure}

For $1\leq p<\infty$, $\|\ndot\|_p$ is $k$-times continuously differentiable in $\R^2\woz$ if and only if its $p$-th power, $\|\ndot\|_p^p$, is $k$-times continuously differentiable in $\R^2\woz$. Also, note that $x\mapsto |x|^p$ is $2$-times continuously differentiable in $\R$ for $p\geq 2$. Furthermore, if $1\leq p<2$, then $x\mapsto |x|^p$ is $2$-times differentiable in $\R\woz $. Recall from Section~\ref{sec_hoelder} that for all $1\leq p<\infty$, the second differential of $q\mapsto\| q\|_p^p$ (in all points where it exists)
is locally $\delta$-H\"older for some $\delta>0$. Hence, we conclude that:
\begin{itemize}[label=\raisebox{0.25ex}{\tiny$\bullet$}, itemsep=3pt]
\item For $2\leq p<\infty$: $\|\ndot\|_p$ restricted to $\R^2\woz$ is of class $C^{2,\delta}$, for some $\delta>0$.
\item For $1\leq p< 2$ : $\|\ndot\|_p$ restricted to $\R^2\backslash \{(x,y)\in\R^2: x=0 \text{ or } y=0\}$  is of class of class $C^{2,\delta}$, for some $\delta>0$. Moreover, at points in   $\{(x,y)\in\R^2: x=0 \text{ or } y=0\}\woz$, $\|\ndot\|_p$  is once but not twice differentiable.
\end{itemize}

To apply Theorem~\ref{thm1}, we distinguish the cases $p\geq 2$ and $1<p<2$. Moreover, we will explain, why Theorem~\ref{thm1} fails completely in the third case where $p\in\{1,\infty\}$:

\subsection{First case: $2\leq p<\infty$}
As explained above, in this case, the norm ${\|\ndot\|_p}$ is of class $C^{2,\delta}$ for some $\delta>0$. Also $B_{\|\ndot\|_p}(0,1)$ is strictly convex. Thus, by Proposition~\ref{prop2} and Proposition~\ref{prop3}, Theorem~\ref{thm1} applies.

\subsection{Second case: $1<p<2$}\label{sec_p<2} 

Here, $B_{\|\ndot\|_p}(0,1)$ is still strictly convex. However, as concluded above, $N$ is not twice differentiable in points that lie on the axes. So Theorem~\ref{thm1} is not applicable. Moreover, as we will show now, transversality fails for certain projection directions: First notice that by strict convexity of $B_{\|\ndot\|_p}(0,1)$ and the fact that $N$ restricted to $\R^n$ is differentiable, the Gauss map $G$ is well-defined and bijective. Thus in particular the projection formula \eqref{proj_formula} holds.
Let $q\in\R^2\woz$ be a point on the $y$-axis, $v_0\in \sphere^1$ a direction parallel to the $y$-axis and $t_0\in\R$ such that $v_0=v(t_0)$. Note that $G(v_0)=v_0$ and $q\ndot v_0=q$. Then, by either by following the intuition given in Remark~\ref{rmk_proj_intuitively} or by applying the projection formula~\eqref{proj_formula}, we see that $P(v_0,q)=0$.  On the other hand, we know that at points $\tilde{q}$ where $\partial B_{\|\ndot\|_p}(0,1)$ intersects the $y$-axis, the curvature of $\partial B_{\|\ndot\|_p}(0,1)$ equals $+\infty$. This implies that the differential of the Gauss map $DG$ tends to $+\infty$ as one approaches $v_0$, and hence, $DG^{-1}(v_0)=0$. 
Then, by \eqref{eq_a1}, it follows that $\frac{d}{dt}P(v(t_0),x)=0$. Therefore, property (P3) and by linearity~\eqref{def_Phi_linear} also transversality fail.\\
Note that $v_0$ is not the only bad projection direction (i.e. direction for which transversality fails in the above sense): By symmetry of $B_{\|\ndot\|_p}(0,1)$, we obtain the following set of bad directions:  \begin{equation}\label{def_M}
M:= \left\{ \left(
\begin{array}{c}
\cos t\\ \sin t\\
\end{array}
\right): t\in \left\{0, \tfrac{\pi}{2}, \pi, \tfrac{3\pi}{2}\right\}\right\}.
\end{equation}
As we shall see shortly, transversality as well as the necessary regularity properties hold in the complement of a neighbourhood of this set of bad directions $M$ and as a consequence, the conclusions of Theorem~\ref{thm1} can still be established. To this end, let $0<\epsilon<\tfrac{\pi}{4}$ and $A_\epsilon\subset \sphere^1$ the complement of the closed $\epsilon$-neighbourhood of $M$:  

$$A_\epsilon:=\left\{\left(\begin{array}{c}
\cos t\\ \sin t\\
\end{array}\right): t\in (\epsilon,\tfrac{\pi}{2}-\epsilon)\cup (\tfrac{\pi}{2}+\epsilon,\pi-\epsilon)\cup (\pi+\epsilon, \tfrac{3\pi}{2}-\epsilon)\cup (\tfrac{3\pi}{2}+\epsilon, 2\pi-\epsilon)\right\}.$$

Define $\tilde{A}_\epsilon:=\left\{(x,y)\in\R^2: (x,y)=rv,\, r\in\R\woz, \, v\in A_\epsilon \right\}$. Obviously, $N$ is twice differentiable in $\tilde{A}_\epsilon$ and that by the theory in Section~\ref{sec_hoelder}, $N$ restricted to $\tilde{A}_\epsilon$ is of class $C^{2\delta}$ for some $\delta>0$. 
Then, as in the proof of Theorem~\ref{thm1}, using the strict convexity of $B_N(0,1)$ and the regularity of $N$ on $\tilde{A}_\epsilon$, we can conclude that $G:A_\epsilon\to G(A_\epsilon)\subset \sphere^1$ is a $C^{1,\delta}$-diffeomorphism. 
By strict convexity of $B_N(0,1)$, the projections $P_v$ and $\Pi(v,\ndot)$ given by the equations \eqref{proj_formula} and \eqref{def_Pi}) are well-defined for $v\in A_\epsilon$, and by the regularity of $G$ on $A_\epsilon$, we can conclude that properties (P1), (P2), and (P3) from Lemma~\ref{lem_simplify_cond} hold for $\Pi:J_\epsilon\times \Omega \to \R$, where 
$$J_\epsilon:= \left\{t\in J: \left(\begin{array}{c}
\cos t\\ \sin t\\
\end{array}\right)\in A_\epsilon\right\} .$$ 
It is easy to check that, the projection formula \eqref{proj_formula} still holds in this new setting (where $G$ is non bijective $\partial B_N(0,1)\to\sphere^1$ but bijective from a suitable subset of $\partial B_N(0,1)$ onto $A_\epsilon$). Moreover, one can easily check that Lemmas~\ref{lem_simplify_cond} is still valid if we replace $J$ by $J_\epsilon$. Therefore the conclusions for of Theorem~\ref{thm1} hold for $v\in A_\epsilon$ (instead of $v\in \sphere^1$). \\[3pt]
Recall that $M$, defined in \eqref{def_M}, is a $0$-dimensional set and that, by definition of $A_\epsilon$,  $\sphere^1 \backslash M = \bigcup_{n\in\N} A_{\frac{1}{n}}.$
Thus for every measurable set $A\subset \sphere^1$, $$\dim A=\dim \left( \bigcup_{n\in\N} (A\cap A_{1/n}) \right)=\sup_{n\in\N}\left(\dim (A\cap A_{1/n}) \right).$$
Therefore, the conclusions of Theorem~\ref{thm1} hold to the full extent (i.e. for $v\in\sphere^1$).

\subsection{Third case: $p\in\{1,\infty\}$ }\label{sec_p=1}

By rotational symmetry (see e.g. Figure~\ref{fig_balls}), it suffices to consider the case $p=1$.
First notice that $B_{\|\ndot\|_1}(0,1)$ is not strictly convex and thus $P$ and $\Pi$ are not well-defined for certain directions $v\in\sphere^1$. However, this is not a main obstacle, since it is easily checked that there are only four directions $v\in\sphere^1$ for which $P$ and $\Pi$ are not well-defined: the diagonal directions. We denote the set of the four diagonal directions by $D$ and we restrict our study to directions $v\in\sphere^1\backslash D$.

\begin{figure}[h]
\begin{center}
\def\svgwidth{350pt}
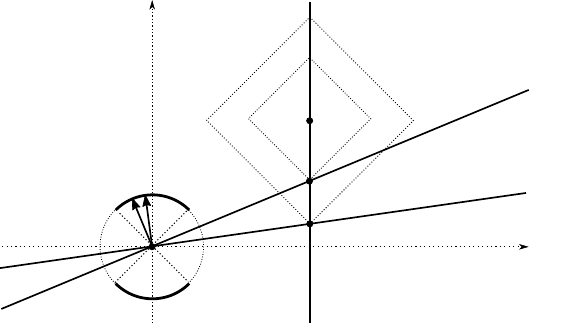
\end{center}
\caption{Projection of an arbitrary point $q$ on $L$ onto lines $H_{v_1}$ and $H_{v_2}$ orthognal to $v_1$ resp. $v_2$ in $V_y$.} \label{fig_pnorm_proj}
\end{figure}

Following the construction of the projection explained in Remark~\ref{rmk_proj_intuitively}, immediately yields that for every $v\in V_y$, where 
$V_y :=\left\{\left(\begin{array}{c}
\cos t\\ \sin t\\
\end{array}\right):t\in (\tfrac{\pi}{4},\tfrac{3\pi}{4})\cup (\tfrac{5\pi}{4},\tfrac{7\pi}{4})\right\},$ each line $L$ parallel to the $y$-axis is collapsed to one point by $P_v$, as illustrated in Figure~\ref{fig_pnorm_proj}.
Symmetrically, for every  $v\in V_x$, where $V_x:=\left\{\left(\begin{array}{c}
\cos t\\ \sin t\\
\end{array}\right):t\in (0,\tfrac{\pi}{4})\cup (\tfrac{3\pi}{4}, \tfrac{5\pi}{4})\cup (\tfrac{7\pi}{4},2\pi)\right\},$
each line parallel to the $x$-axis is collapsed to one point by $P_v$. As a straightforward consequence, the conclusions of Theorem~\ref{thm1} fail completely.

\section{Final remarks}\label{sec_final}
As we have learned, the example of $p$-norms on $\R^2$ provide an almost exhaustive range of situations that can occur: Cases where Theorem~\ref{thm1} holds to the full extent ($2\leq p<\infty$); cases where the assumptions from Theorem~\ref{thm1} are partly met and its conclusion remains true to the full extent ($1<p\leq 2$); and cases where Theorem~\ref{thm1} completely fails ($p\in\{1,\infty\}$).
Now, we consider one more example that, intuitively, lies in between the cases $1<p<2$ and $p=1$. For this example, Theorem~\ref{thm1} will be locally applicable and also the conclusions will hold only locally.\\[6pt]
To this end, we first want to recall to what extent the assumptions from Theorem~\ref{thm1} failed in the cases $1<p<2$ and $p=1$, respectively: 
In the setting where $1<p<2$, the Gauss map $G$ is defined in every point $p\in \partial B_{\|\ndot\|_p}(0,1)$. Intuitively speaking, that is $\partial B_{\|\ndot\|_p}(0,1) $ does not have corners. The only circumstance that did not fit the assumptions of Theorem~\ref{thm1} was, that $G$ fails to be locally diffeomorphic in finitely many $v\in\sphere^1$. We then cut out neighbourhoods of such bad projection directions $v$ and applied (a local version of) Theorem~\ref{thm1} to the complement of these neighbourhoods. Then, we by an easy measure theoretic argument passed to the (full version of the) conclusion of Theorem~\ref{thm1}. Thus by the procedure of Section~\ref{sec_p<2}, we can extend Theorem~\ref{thm1} to cases where $G$ fails to be a local diffeomorphism in a finite (or countable) number of points $v\in \sphere^1$.
Then, in the setting where $p=1$, the Theorem~\ref{thm1} fails completely due to the corners of $ \partial B_{\|\ndot\|_p}(0,1)$ 
(i.e. the points $q\in \partial B_{\|\ndot\|_p}(0,1)$ where $G$ is not defined): 
Each of the four corner $q\in \partial B_{\|\ndot\|_\infty}(0,1)$   yields an open set $V(q)$ of projection directions for which there exists a line $L$ so that for for all $v\in V(q)$, the projection $P_v$ collapses $L$ to a single point (compare Figure \ref{fig_pnorm_proj}). Note that by symmetry $V(q)=V(-q)$. Furthermore, the union of all these sets $V(q)$, that is $V_x\cup V_y$, covers $\sphere^1$ (up to finitely many points).\\[6pt]
We will now study the following concrete example of a (strictly convex) norm $N$ on $\R^2$ for which  $\partial B_{N}(0,1)$  has (finitely many) corners, but the sets $V$ of bad projection directions, that arise from the corners of $\partial B_{N}(0,1)$), do not exhaust $\sphere^1$:\\

Define 
\begin{equation*}
\Gamma_1=\left\{ \left(
\begin{array}{c}
\cos t\\ \sin t\\
\end{array}
\right): t\in \left( -\tfrac{\pi}{4}, \tfrac{\pi}{4}\right)\right\}, \ \ 
\Gamma_2=\left\{ \left(
\begin{array}{c}
\cos t\\ \sin t\\
\end{array}
\right): t\in \left(\tfrac{3\pi}{4},\tfrac{5\pi}{4}\right)\right\}
\end{equation*}
and define a norm $N$ on $\R^2$ by setting 
\begin{equation}\label{eq_def_norm}
\partial B_N(0,1):=\left\{ \left(
\begin{array}{c}
x\\ y\\
\end{array}
\right)+ \left(
\begin{array}{c}
\cos \tfrac{\pi}{4}\\0\\
\end{array}
\right) :  \left(
\begin{array}{c}
x\\ y\\
\end{array}
\right) \in \bar{\Gamma}_1 \right\}
\cup 
\left\{ \left(
\begin{array}{c}
x\\ y\\
\end{array}
\right)- \left(
\begin{array}{c}
\cos \tfrac{\pi}{4}\\0\\
\end{array}
\right) :  \left(
\begin{array}{c}
x\\ y\\
\end{array}
\right) \in \bar{\Gamma}_2 \right\}. 
\end{equation} 
Furthermore, set $p_1=\left(
\begin{array}{c}
0\\ 1\\
\end{array}
\right)$ and $p_2=\left(
\begin{array}{c}
0\\ -1\\
\end{array}
\right)$, see Figure~\ref{fig_example}.

\begin{figure}[h]
\begin{center}
\def\svgwidth{240pt}
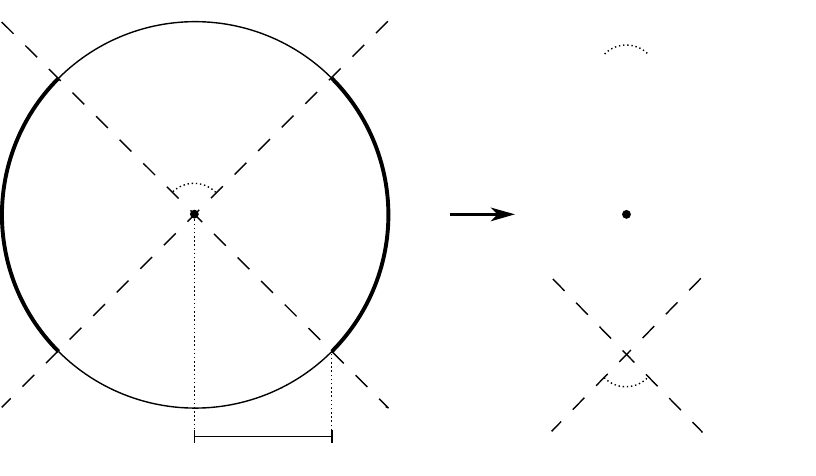
\end{center}
\caption{Construction of $\partial B_N(0,1)$ from the spherical arcs $\Gamma_1$ and $\Gamma_2$}\label{fig_example}
\end{figure}

Then, the Gauss map $G$ is well-defined and a local $C^{1,\delta}$-diffeomorphism (for some $\delta>0$) in all points in $\partial B_N(0,1)\backslash\{p_1,p_2\}$ and that the set of bad projection directions is
$$V(p_1)=V(p_2):= \left\{ \left(
\begin{array}{c}
\cos t\\ \sin t\\
\end{array}
\right): t\in \left[ \tfrac{\pi}{4},\tfrac{3\pi}{4}\right]\cup \left[ \tfrac{5\pi}{4},\tfrac{7\pi}{4}\right]\right\},$$
see Figure~\ref{fig_example_proj} (and compare Figure~\ref{fig_pnorm_proj}).  
\begin{figure}[h]
\begin{center}
\def\svgwidth{350pt}
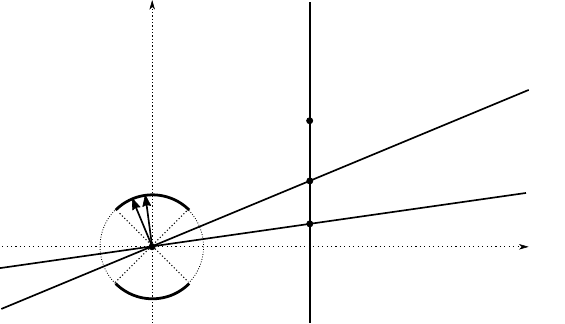
\end{center}
\caption{Projection of a point $q$ on a line $L$ parallel to the $x$-axis onto $H_{v_1}$ resp. $H_{v_2}$, where $v_1,v_2$ lie in $V:=V(p_1)=V(p_2)$.}\label{fig_example_proj}
\end{figure}

Define $E$ to be the image of $\partial B_N(0,1)$ under the Gauss map $G : \partial B_N(0,1)\backslash\{p_1,p_2\}\to\sphere^1$ and note that 
$E:=G(\partial B_N(0,1)\backslash\{p_1,p_2\} )=\sphere^1\backslash V(p_1)=\Gamma_1\cup \Gamma_2$.
Applying the arguments given in Section~\ref{sec_p<2} (case $1<p<2$), the conclusions of Theorem~\ref{thm1} hold for $E\subset \sphere^1$. \\[6pt]
As a combination of our considerations for the above example and the study of the $p$-norm for $1<p<2$, we can conclude the following local versions of our main results:

\begin{thm}\label{lem_local}
Let $N$ be a strictly convex norm on $\R^2$ and $Q=\{q_1,q_2,...\}$, $P=\{p_1,p_2...\}$ two finite or countable subsets of $\partial B_N(0,1)$, such that $\partial B_N(0,1)\backslash(P\cup Q)$ is the union of finitely resp. countably many disjoint open intervals. In addition, we assume that $N$ is such that:\begin{itemize}[label=\raisebox{0.25ex}{\tiny$\bullet$}, itemsep=2pt]
\item The Gauss map $G$ is well-defined and continuous on $\partial B_N(0,1)\backslash P$.
\item $G$ is a local $C^{2,\delta}$-diffeomorphism on $\partial B_N(0,1)\backslash (Q\cup P)$
\end{itemize}
Define $E\subset \sphere^1$ to be the image of $G: \partial B_N(0,1)\backslash P\to \sphere^1$ and for $i\in\N$, define $ V(p_i)$ to be the set of directions $v\in\sphere^1$ for which there exists an open neighbourhood $W$ in $\partial B_N(0,1)$ such that each line parallel to the vector $p_i$ is collapsed to a point by each projection $P_{\tilde{v}}$ with $\tilde{v}\in W$. Then, 
\begin{itemize}[label=\raisebox{0.25ex}{\tiny$\bullet$}, itemsep=2pt]
\item each of the sets $V(p_i)$ is the union of two intervals in $\sphere^1$ that are bijectively mapped onto each other by the antipodal map $x\mapsto x$,
\item $E=\sphere^1\backslash \bigcup_{p_i\in P}V(p_i)$,
\item $V(p_i)=V(-p_i)$, for all $p_i\in P$,
\item and, for each $\epsilon>0$ transversality holds for the projection family $\Pi:J_{P,Q,\epsilon}\times \Omega\to \R,$ where 
\begin{equation*}
 J_{P,Q,\epsilon}=\bigg\{ t\in J: v(t)\in \sphere^1\backslash \bigg(\bigcup_{p_i\in P} V(p_i)\cup Q_\epsilon\bigg)\bigg\} ,
\end{equation*} and $Q_\epsilon$ denotes the $\epsilon$-neighbourhood of the set $Q$.  Moreover, the transversality constant $C$ depends on $\epsilon$. \end{itemize}
\end{thm}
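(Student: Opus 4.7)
The plan is to reduce the four bullet points to a corner analysis followed by a compactness argument that lets me invoke Theorem~\ref{mainlemma}. First I would describe the geometry at a corner $p_i\in P$. Since $\partial B_N(0,1)$ is convex and $p_i$ is a non-smooth point with regular arcs on both sides, there is a nontrivial closed arc $\mathcal{N}(p_i)\subset \sphere^1$ of one-sided outward unit normals at $p_i$, bounded by the one-sided tangent directions of the two adjacent smooth arcs. A direction $v$ lies in the open interior of $\mathcal{N}(p_i)$ precisely when the supporting line of $B_N(0,1)$ perpendicular to $v$ touches $\partial B_N(0,1)$ only at $p_i$. By the picture in Remark~\ref{rmk_proj_intuitively}, this is exactly the condition under which the projection $P_v$ collapses every line parallel to $p_i$ to a single point, so $V(p_i)$ coincides with $\mathrm{int}\,\mathcal{N}(p_i)\cup(-\mathrm{int}\,\mathcal{N}(p_i))$.

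Next I would deduce the three structural bullets at once. Antipodal symmetry of $B_N(0,1)$ implies that $p_i\in P$ iff $-p_i\in P$, and that $\mathcal{N}(-p_i)=-\mathcal{N}(p_i)$, whence $V(p_i)=V(-p_i)$ is the disjoint union of two open arcs exchanged by the antipodal map. For the identity $E=\sphere^1\setminus\bigcup_i V(p_i)$, I note that $G$ is continuous on $\partial B_N(0,1)\setminus P$ and that near each corner the gap in its image is exactly $\mathrm{int}\,\mathcal{N}(p_i)\cup \mathrm{int}\,\mathcal{N}(-p_i)$. Combined with the local diffeomorphism property on $\partial B_N(0,1)\setminus(P\cup Q)$ and the monotone progression of the normal direction along each regular arc (an intermediate-value argument on the arc-length parametrization), this yields surjectivity of $G$ onto $\sphere^1\setminus\bigcup_i V(p_i)$ and hence the claimed identity.

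For transversality, the strategy is to mimic the proof of Theorem~\ref{mainlemma} verbatim on the compact parameter set $\overline{J_{P,Q,\epsilon}}\times\overline{\Omega}$, using only that on this set $G^{-1}$ is well-defined and of class $C^{1,\delta}$ with nonvanishing differential. Concretely, I would verify the three conditions of Lemma~\ref{lem_simplify_cond} in their restricted form. Boundedness (P1) is immediate from compactness together with the uniform lower bound $v\cdot G^{-1}(v)\geq c_\epsilon>0$ on $\overline{J_{P,Q,\epsilon}}$, which follows from Remark~\ref{rmk_x_times_G(x)_non-zero} applied arc by arc. Regularity (P2) comes from the H\"older inverse function theorem (Theorem~\ref{thm_InverseFunctionTheorem}) applied to $G$ on each regular arc, shrunk by $\epsilon$ away from $Q$, together with the $C^{1,\delta}$-calculus recalled in Section~\ref{sec_hoelder}. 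Property (P3) is obtained exactly as in Lemma~\ref{lem_transv}: the argument there uses only that $N$ is $C^2$ off the origin and that $DG^{-1}$ is nondegenerate, both of which are available on the kept region. The transversality constant then inherits its $\epsilon$-dependence from $c_\epsilon$ and from the $C^{1,\delta}$-norms of $G^{-1}$ on the corresponding compact subset of $\sphere^1$.

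The main obstacle will be the first step: making rigorous that the set of $v\in\sphere^1$ for which \emph{every} line parallel to $p_i$ is collapsed by $P_v$ equals $\mathrm{int}\,\mathcal{N}(p_i)\cup(-\mathrm{int}\,\mathcal{N}(p_i))$. This requires a careful supporting-ball argument: for $L\parallel p_i$ through $q\notin L$, the line $L$ is collapsed by $P_v$ to a point iff the $N$-ball $B_N(q,\dist_N(q,L))$ is tangent to $L$ along a nondegenerate arc containing a translate of $p_i$, which occurs precisely when $v$ lies in the stated antipodal pair of open arcs. Once this geometric fact is in place, the structural bullets follow by symmetry and the transversality claim by the routine reduction to Theorem~\ref{mainlemma} sketched above.
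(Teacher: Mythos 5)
Your proposal is correct and follows essentially the same route as the paper, which establishes this theorem only by combining the arguments of Section~\ref{sec_p<2} (excising $\epsilon$-neighbourhoods of $Q$ and re-running Lemma~\ref{lem_simplify_cond}, Lemma~\ref{lem_transv} and Theorem~\ref{mainlemma} on the remaining compact parameter set) with the corner analysis of the example in Section~\ref{sec_final}. Your normal-cone description of $V(p_i)$ and the supporting-line argument supply details the paper leaves implicit, but the decomposition and the key steps are the same.
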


\begin{thm}\label{thm1_loc}
Under the assumptions of Lemma~\ref{lem_local}, 
Theorem~\ref{thm1} holds for $E$, that is, for all Borel sets $A\subseteq \R^2$, $s:=\dim(A)$, the following hold:
\begin{enumerate}[itemsep=3pt]
\item if $s>1$ then
 \begin{enumerate}[label=(1.\alph*)]
\item $\Hhh^{1}(P_v(A))>0$ for $\Hhh^{1}$-a.e. $v\in E$.
\item $\dim\{v\in E: \Hhh^{1}(P_v(A))=0\}\leq 2-\min\{s,1+\delta\}$.
\end{enumerate}
\item if $s\leq 1$, then \begin{enumerate}[label=(2.\alph*)]
\item $\dim(P_v(A))=\dim(A)$ for $\Hhh^{1}$-a.e. $v\in E$,
\item $\dim\{v\in E: \dim(P_v(A))<s\}\leq s$.
\end{enumerate}
\end{enumerate}
On the other hand, 
the above statements fail for all subsets  of $\, \sphere^1\backslash E$.
\end{thm}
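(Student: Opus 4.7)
The plan is to derive the positive direction of Theorem~\ref{thm1_loc} by a countable exhaustion argument from Theorem~\ref{thmPS} via Lemma~\ref{lem_local}, and to establish the negative direction by constructing explicit counter-examples supported on lines parallel to the corner points $p_i$.

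\emph{Positive direction.} Lemma~\ref{lem_local} already packages the hard analytic content: for every $\epsilon > 0$, the family $\Pi : J_{P,Q,\epsilon} \times \Omega \to \R$ satisfies conditions (a), (b), (c) of Definition~\ref{def_PS}. First I would apply Theorem~\ref{thmPS} to this family and transfer the conclusions through the arc-length parametrization $v : J \to \sphere^1$ (as in the proof of Theorem~\ref{thm1}) to obtain (1.a)--(2.b) for $v \in E \setminus Q_\epsilon$. Then I would take $\epsilon = 1/n$ and pass to the union, noting that $\bigcup_n (E \setminus Q_{1/n}) = E \setminus \overline{Q}$. Since $\overline{Q}$ is at most countable, it is both $\Hhh^1$-null and zero-dimensional; as countable unions of $\Hhh^1$-null sets remain $\Hhh^1$-null and Hausdorff dimension is stable under countable unions, both the a.e.\ statements (1.a), (2.a) and the exceptional-set dimension bounds (1.b), (2.b) pass unchanged from $E \setminus \overline{Q}$ to $E$.

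\emph{Negative direction.} By Lemma~\ref{lem_local}, for each $p_i \in P$ the set $V(p_i)$ is a union of two intervals in $\sphere^1$ (hence of positive length and Hausdorff dimension one), and for every $v \in V(p_i)$ the linear map $P_v$ has kernel equal to $\R p_i$. After an affine change of coordinates I would take $p_i = (1,0)$. To falsify (1.a) and (1.b), I would pick $s \in (1, 1+\delta)$, take a compact $C \subset \R$ with $\dim C = s - 1$, and set $A := [0,1] \times C$; the standard product formula gives $\dim A = s > 1$, while $P_v(A)$ is the image of $\{0\} \times C$ under the non-degenerate linear isomorphism $\{0\} \times \R \to H_v$ induced by $P_v$, so $\dim P_v(A) = s-1 < 1$ and hence $\Hhh^1(P_v(A)) = 0$ on all of $V(p_i)$. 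This contradicts (1.a), and since $\dim V(p_i) = 1 > 2 - s = 2 - \min\{s, 1+\delta\}$, it also contradicts (1.b). To falsify (2.a) and (2.b), I would take $s \in (0, 1]$ and a compact $A \subset \R p_i$ with $\dim A = s$, so that $P_v(A)$ collapses to a single point for every $v \in V(p_i)$, yielding $\dim P_v(A) = 0 < s$ on the one-dimensional set $V(p_i)$.

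The hard analytic work has already been absorbed into Lemma~\ref{lem_local}, so I do not expect any major conceptual obstacle. The one subtle point is that the transversality constant in Lemma~\ref{lem_local} depends on $\epsilon$; this is harmless because Theorem~\ref{thmPS} is applied independently at each scale $\epsilon = 1/n$ and the measure- and dimension-theoretic conclusions are combined only afterwards.
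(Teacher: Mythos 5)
Your proposal is correct and follows essentially the same route the paper takes (the paper leaves the proof of this theorem implicit, deferring to the $\epsilon$-neighbourhood exhaustion of Section~\ref{sec_p<2} for the positive direction and to the line-collapsing observation of Section~\ref{sec_p=1} and Figure~\ref{fig_example_proj} for the failure on $\sphere^1\backslash E$). Your explicit counterexamples $[0,1]\times C$ and $A\subset \R p_i$ simply instantiate the paper's remark that the collapse of all lines parallel to $p_i$ makes the conclusions fail on the full one-dimensional set $V(p_i)$.
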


\begin{thm}
Under the assumptions of Lemma~\ref{lem_local}: An $\Hhh^1$-measurable sets $A\subset \R^2$ with $\Hhh^1(A)<\infty$ is purely $1$-unrectifiable if and only if $\Hhh^1(P_v(A))=0$ for $\Lll^1$-a.e. $v\in E$.
\end{thm}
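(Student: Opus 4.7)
The strategy is to adapt the proof of Theorem~\ref{thm2} to the localized setting, replacing the single application of Theorem~1.2 in \cite{HJJL2012} by a countable family of local applications and then assembling the conclusions via an exhaustion argument in $\epsilon$.

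First, I would fix a sequence $\epsilon_n\downarrow 0$. For each $n$, Lemma~\ref{lem_local} provides transversality for the projection family $\Pi:J_{P,Q,\epsilon_n}\times \Omega\to \R$. The regularity properties (P1) and (P2) from Lemma~\ref{lem_simplify_cond} transfer to this restricted family by the same argument as in the proof of Theorem~\ref{mainlemma}, using that $G$ is a local $C^{1,\delta}$-diffeomorphism on the preimage in $\partial B_N(0,1)$ of $E\setminus Q_{\epsilon_n}$. Hence conditions (a), (b), (c) of Definition~\ref{def_PS} hold on $J_{P,Q,\epsilon_n}\times \Omega$, and Theorem~1.2 of \cite{HJJL2012} applies to give: an $\Hhh^1$-measurable $A\subset\R^2$ with $\Hhh^1(A)<\infty$ is purely $1$-unrectifiable if and only if $\Hhh^1(\Pi(t,A))=0$ for $\Lll^1$-a.e. $t\in J_{P,Q,\epsilon_n}$.

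Next, I would use that $\Pi(t,A)$ is isometric to $P(v(t),A)$ and that $v:J\to\sphere^1$ is a local isometry, so that (as in the proof of Theorem~\ref{thm2}) the above equivalence translates to the statement that $A$ is purely $1$-unrectifiable if and only if $\Hhh^1(P_v(A))=0$ for $\Lll^1$-a.e. $v\in v(J_{P,Q,\epsilon_n})=E\setminus Q_{\epsilon_n}$.

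Finally, I would pass to the limit $n\to\infty$. For the forward implication, pure unrectifiability produces, for each $n$, an $\Lll^1$-null exceptional set $N_n\subset E\setminus Q_{\epsilon_n}$; the union $\bigcup_n N_n\cup \bar Q$ is again $\Lll^1$-null because, under the hypotheses of Lemma~\ref{lem_local}, the set $P\cup Q$ is a closed at-most-countable subset of $\partial B_N(0,1)$, so $\bar Q=Q$ carries no length on $\sphere^1$. This yields $\Hhh^1(P_v(A))=0$ for $\Lll^1$-a.e. $v\in E$. For the converse, the hypothesis immediately implies the condition on $E\setminus Q_{\epsilon_1}$, and the local Besicovitch-Federer equivalence established above then forces $A$ to be purely $1$-unrectifiable. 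The main obstacle is really bookkeeping: one has to ensure that the null sets assembled from the countably many local statements remain null after taking $\epsilon\to 0$, which reduces to checking that $Q$ contributes nothing to $\Lll^1$ on $\sphere^1$; everything substantive is already encapsulated in Lemma~\ref{lem_local} and \cite{HJJL2012}.
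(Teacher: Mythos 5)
Your proposal is correct and is essentially the argument the paper intends: the theorem is stated without an explicit proof, as a combination of the exhaustion-in-$\epsilon$ argument of Section~\ref{sec_p<2} with the application of Theorem~1.2 of \cite{HJJL2012} already used in the proof of Theorem~\ref{thm2}, which is exactly what you carry out. Two minor points to tighten: Theorem~1.2 of \cite{HJJL2012} is stated for a parameter \emph{interval}, so it should be invoked on each connected component of $J_{P,Q,\epsilon_n}$ separately, and the negligible set left over in the limit is $\bigcap_n Q_{\epsilon_n}=\bar{Q}\subseteq P\cup Q$ (and its image under $G$), which is countable and hence $\Lll^1$-null even though it need not equal $Q$ itself.
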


Note that, in the case where $N=\|\ndot\|_p$ for $1<p<2$, $P=\lm$, $Q=\{(1,0),(0,1),(-1,0),(0-1)\}$ and thus $E=\sphere^1$. On the other hand, recall that in the case where $N$ is defined by \eqref{eq_def_norm}, $Q=\lm$, $P=\{p_1,p_2\}$ as given below \eqref{eq_def_norm} and illustrated in Figure~\ref{fig_example} and $E=\Gamma_1\cup \Gamma_2$.\\[6pt]
As pointed out in the introduction, in various settings higher dimensional versions of Marstrand type theorems and Besicovitch-Federer type characterizations for purely unrectifiability are known to hold. This raises the natural question about the existence of projection theorems in higher dimensional normed spaces. This is the subject of forthcoming work in preparation. It is interesting to mention here, that transversality can only be expected for finite dimensional normed spaces, since Bate et.\,al.~\cite{Bate2017} have shown that the Besicovitch-Federer characterization of purely-unrectifiable sets (and therefore also transversality) fail to hold in the setting of infinite dimensional Banach spaces. Thus, any sort of projection theorem one might expect in such infinite dimensional spaces would have to be proven by different means, e.g. the potential theoretical methods due to Kaufman and Mattila, mentioned in the introduction. Also, one can obtain parts of Theorem~\ref{thm1} (resp. Theorem~\ref{thm1_loc}) for norms $N$ on $\R^2$ with less regularity (i.e. for $N$ restricted to $\R\woz$ of class $C^{1,1}$) by adapting these potential theoretical methods. In particular, we observe that based on the fact that the closest point projection map $(v,x)\mapsto P(v,x)\ndot Rv$ (compare \eqref{def_Pi}) is linear in $x$, we can rewrite it as $(v,x)\mapsto F(v)\ndot x$, for some mapping $F:\sphere^1\to\sphere^1$. Therefore, Marstrand type projections theorems can be obtained by studying measure and dimension preserving properties of the mapping $F$. For more details we refer to \cite{AnninaPhD}.

\bibliography{literature_projections}
\bibliographystyle{abbrv}

 \end{document}